\newcommand{\Rmnum}[1]{\expandafter\@slowromancap\romannumeral #1@}
\newtheorem{thm}{Theorem}
\newtheorem{cor}{Corollary}
\newcommand{\eq}[1]{$\displaystyle  #1 $}
\begin{document}

\preprint{AIP/123-QED}

\title[Observer for Occluded Systems]{An Observer for an Occluded Reaction-Diffusion System With Spatially Varying Parameters}

\author{Sean Kramer}
\email{skramer@norwich.edu}
\affiliation{Department of Mathematics, Norwich University}

\author{Erik M. Bollt}
\email{bolltem@clarkson.edu}
\affiliation{Department of Mathematics, Clarkson University}

\date{\today}

  \begin{abstract}
    Spatially dependent parameters of a two-component chaotic reaction-diffusion PDE model describing ocean ecology are observed by sampling a single species. We estimate model parameters and the other species in the system by autosynchronization, where quantities of interest are evolved according to misfit between model and observations, to only partially observed data. Our motivating example comes from oceanic ecology as viewed by remote sensing data, but where noisy occluded data are realized in the form of cloud cover. We demonstrate a method to learn a large-scale coupled synchronizing system that represents spatio-temporal dynamics and apply a network approach to analyze manifold stability. 
  \end{abstract}
  
  \keywords{Remote sensing, synchronization, autosynchronization, parameter identification, assimilation, moving neighborhood network}
  
  \maketitle
  
  \begin{quotation} 
  Research in large-scale oceanic phenomena is made possible by remote sensing instruments mounted on ocean-observing satellites. These instruments provide datasets that can be filtered to study sizable ecological events, including harmful algal blooms. The fact that datasets are often patchy when clouds hide regions in the spatial domain is a substantial difficulty when attempting to parameterize a dynamical system. To attack this problem we extend a recently developed autosynchronization method.  Model parameters and states are evolved in a drive-response pattern, on a-priori known model equations, to learn model states and parameters even while data are considerably spatially occluded. It has been shown that, assuming the model structure to be known, a synchronization system can be designed to effectively act as an observer to identify system parameters, even in a large scaled network system.  While a discretized PDE can be interpreted as a particular lattice network, the realistic problem of cloud occlusions will cause times where the observer network is essentially disconnected.  Our prior work has shown that synchronization can exist even in a large scale network that is not fully connected but rather has a so-called fast blinking structure.  The method is analyzed by interpreting the discretized PDE as a large-scale coupled moving neighborhood network.
  \end{quotation}

  \section{Introduction}\label{sec:intro}
  
  Algae form the basis of the food chain in the oceans and are ultimately responsible for providing nourishment for other marine life further up the food chain \cite{fasham93}. Seasonal environmental heterogeneities such as nutrient replenishment, predation, and temperature provide favorable conditions for recurring algal blooms, often called spring blooms. More localized bloom events are commonly observed in estuaries and coastal regions \cite{tb94}. Certain bloom events, especially harmful algal blooms, elicit widespread repercussions on regional communities including human sickness, shellfish poisoning, and fish kills \cite{backer06}. These harmful algal blooms are detrimental to regional ecology and economies through fishing losses and tourism depletion. 
 Models for near-shore algal blooms would be extremely useful for forecasting during such events and might help inform short-term management decisions. 
    
Parameter and state identification based on observed data remains an important topic in both dynamical systems and control theory. Several powerful methods for parameter estimation of spatio-temporal systems include Kalman filter methods \cite{schiff08,annan05,wan00}, multiple shooting methods \cite{Muller04,muller02}, and synchronization methods \cite{PC90,P96,PJK96,SKP96,YCCLP07,YP08,SO09,QBCKA09,SSLP10,BLP11,SP11}. Autosynchronization is a special variation of synchronization methods based on an approach to force a response model to adapt to observed data by developing additional equations for the parameters that depend on the synchronization error \cite{P96,SO09}. Our implementation of the method assumes prior knowledge of the model structure. Recently, it has been shown that it is possible to estimate spatially dependent parameters for a PDE system by autosynchronization using a combination of diffusive and complete replacement coupling of observed data (drive model) to force the response model and parameters to synchronize with observables \cite{kramer2013spatially}. 
  
Our interest here is to exploit these ideas toward modeling ocean ecology as informed by hyperspectral remote sensing data captured by ocean observing satellites. Many well accepted ocean ecology models include predator-prey dynamics between at least two components: zooplankton, the predator, and phytoplankton, the prey \cite{edwards01,upadhyay09,malchow00,M02,freund06,malchow05,truscott94,steffen97,scheffer97,edwards96,truscott94,matthews97,edwards99,fasham90,scheffer91}. Data observations are often noisy or patchy, particularly when observing spatio-temporal systems. The usual hurdle to fitting and subsequently solving a predator-prey reaction-diffusion system as informed by remote sensing data is the inability to observe zooplankton. As of now, there exists no method to estimate zooplankton densities based on hyperspectral inferences. Here, we adapt the method of autosynchronization of PDEs to be used with less available information, where noisy data are occluded by clouds.

At the heart of the problem is the observability of the dynamical system based on available sampling data, in this case phytoplankton. The problem of observability on nonlinear systems has been a topic of research over the past decade and is now much better understood \cite{letellier2005relation,letellier2005graphical,bianco2015symbolic}.  We therefore demonstrate that the system we study is observable from the variable provided by remote sensing data. We note that one might first check if the corresponding ODE system is observable by investigating the invertibility of the Jacobian of the differential embedding map of observed samples \cite{letellier2005relation}. Such a result would provide hope that a search for an autosynchronization scheme is worthwhile. 
  
We begin by introducing the reaction-diffusion equations used to create synthetic observed data. Next, we assign the response system and discuss an autosynchronization configuration. We show the method can work with significant proportions of the data unobservable, e.g. data occluded by cloud cover. Finally, we consider the large-scale coupled synchronization system as a moving neighborhood network and apply a theorem for synchronization based on the rate of switching between network topologies to prove that our system can synchronize. It is shown that as long as the average network corresponding to the graph Laplacian supports synchronization and the switching epoch between new samples of network topologies is small enough, synchronization is achieved. Therefore, it is feasible to realize model fitting and data assimilation for multi-component ecological systems with realistic remote sensing data.


\section{Model Dataset}\label{sec:RD}
Satellite data of plankton blooms often reveal complex mesoscale structures such as ocean gyres and eddies for which there are several theories. As a synthetic dataset, the spatiotemporal model for plankton ecology should have the capability to render mesoscale structures. Medvinksi, et al, \cite{M02} describe a two-component predator-prey model, including phytoplankton and zooplankton, over a rectangular two-dimensional region. Given perturbed initial conditions, the model exhibits spiral patterns on a spatial scale comparable to that which is observed in nature. By sampling snapshots from the solution of this model, we emulate a satellite image dataset. The dataset is complicated by including spatially varying parameters. This is a valid consideration when modeling mesoscale ocean ecology. Consider the system of two PDEs as given in \cite{M02},
\begin{eqnarray} \label{eq:Fish}
\frac{\partial P}{\partial t} &=& \triangle P + P(1-P) - \frac{P Z}{P + h}, \ \ \ \textrm{and}\\
\nonumber \frac{\partial Z}{\partial t} &=& \triangle Z + k\frac{P Z}{P+h} - mZ,
\end{eqnarray} 
where $P(x,y,t)$ represents phytoplankton density, $Z(x,y,t)$ represents zooplankton density, and both are observed on a compact connected two-dimensional domain, $\Omega$, with zero-flux boundary conditions. 

These equations represent a dimensionless reaction-diffusion model for phytoplankton-zooplankton ecology, invoking predator prey dynamics in the reaction term. The ecology is considered over a horizontal layer with homogeneous vertical distributions in the water column. Our simulations are computed over a grid of size $\Omega = 864 \times 288$. The model assumes that phytoplankton obey a logistic growth and are grazed upon by zooplankton following a Holling-type \Rmnum{2} functional response. The Holling-type \Rmnum{2} functional response \cite{Holling59} assumes a decelerating growth rate wherein the predator is limited by its ability to efficiently process food. Zooplankton grow at a rate, $k$, proportional to phytoplankton mortality and die according to a natural mortality rate $m$. For scalar parameters, $k = 2$, $h = 0.4$, and $m = 0.6$, and nonuniform initial conditions, this system gives rise to transient spiral pattern behavior, and progresses into spatially irregular patchy patterns \cite{M02}. We perform numerical simulations with a basic forward-time and central-space discretization using the perturbed initial conditions found in \cite{M02}. 

The system Eq (\ref{eq:Fish}) is modified as found in \cite{M02} by allowing the parameters to be nonnegative $C^0(\Omega)$ functions. Generally, we may allow $\Omega \subset \mathbf{R}^2$ to be a compact domain such as a rectangle for simplicity or a realistic domain representing a coastal region obtained from a satellite. Two examples are found in Figure \ref{fig:figSat}, where high a concentration of phytoplankton appears as a greenish coloring of the water. Imaging sensors mounted on satellites measure light in discrete bandwidths, including several bandwidths outside of the visible range. These bandwidths are subsequently combined to build certain products of interest. To reconstruct an image as the eye would see it, bandwidths in the visible spectrum are combined to build what is called a ``quasi-true'' image. The quasi-true color image at the top of Figure \ref{fig:figSat} was taken on July 8, 2010 from the HICO (Hyperspectral Imager for the Coastal Ocean) instrument mounted on the Japanese Experiment Module Exposed Facility on the International Space Station. It is the first such imaging spectrometer specifically designed to sample the coastal ocean \cite{HICO}. The image captures the Columbia River mouth bordering Oregon and Washington. The domain is large enough to render mesoscale and small scale patterns, which may result from complex intra-species and fluid dynamics. The image at the bottom of Figure \ref{fig:figSat} was taken by the MERIS (Medium Resolution Imaging Spectrometer) instrument on board the Envisat satellite. Here again high phytoplankton concentrations appear as a greenish coloring in the water. This image highlights a presently unavoidable issue with hyperspectral satellite data: the presence of cloud coverage. 
\begin{figure}[!h]
\begin{center} 
\begin{minipage}{.5\textwidth}
\hspace{-1cm}
{\includegraphics[width=3.2in]{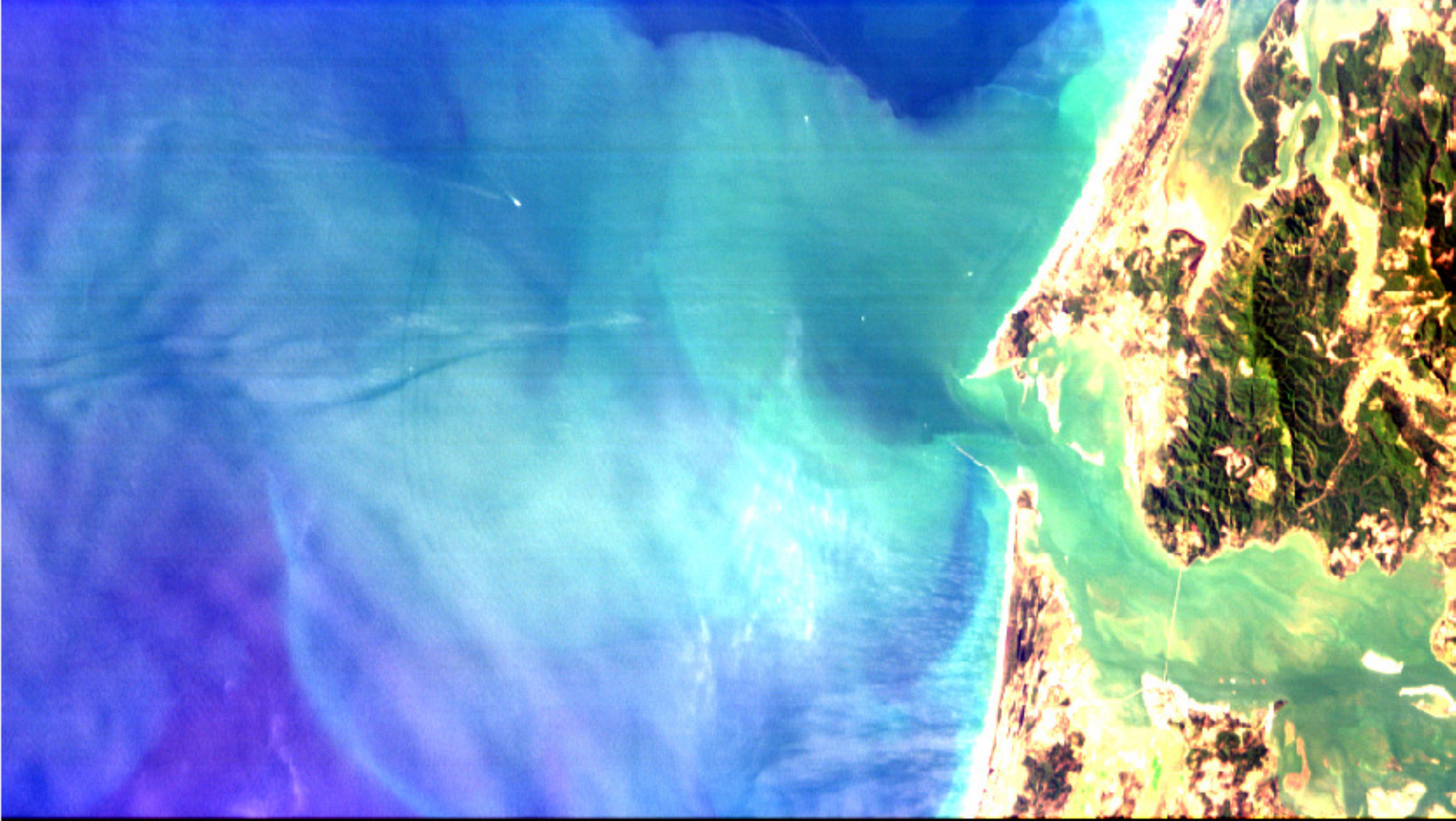}}
\end{minipage}
\begin{minipage}{.5\textwidth}
\hspace{1.5cm}
{\includegraphics[width = 1.9in]{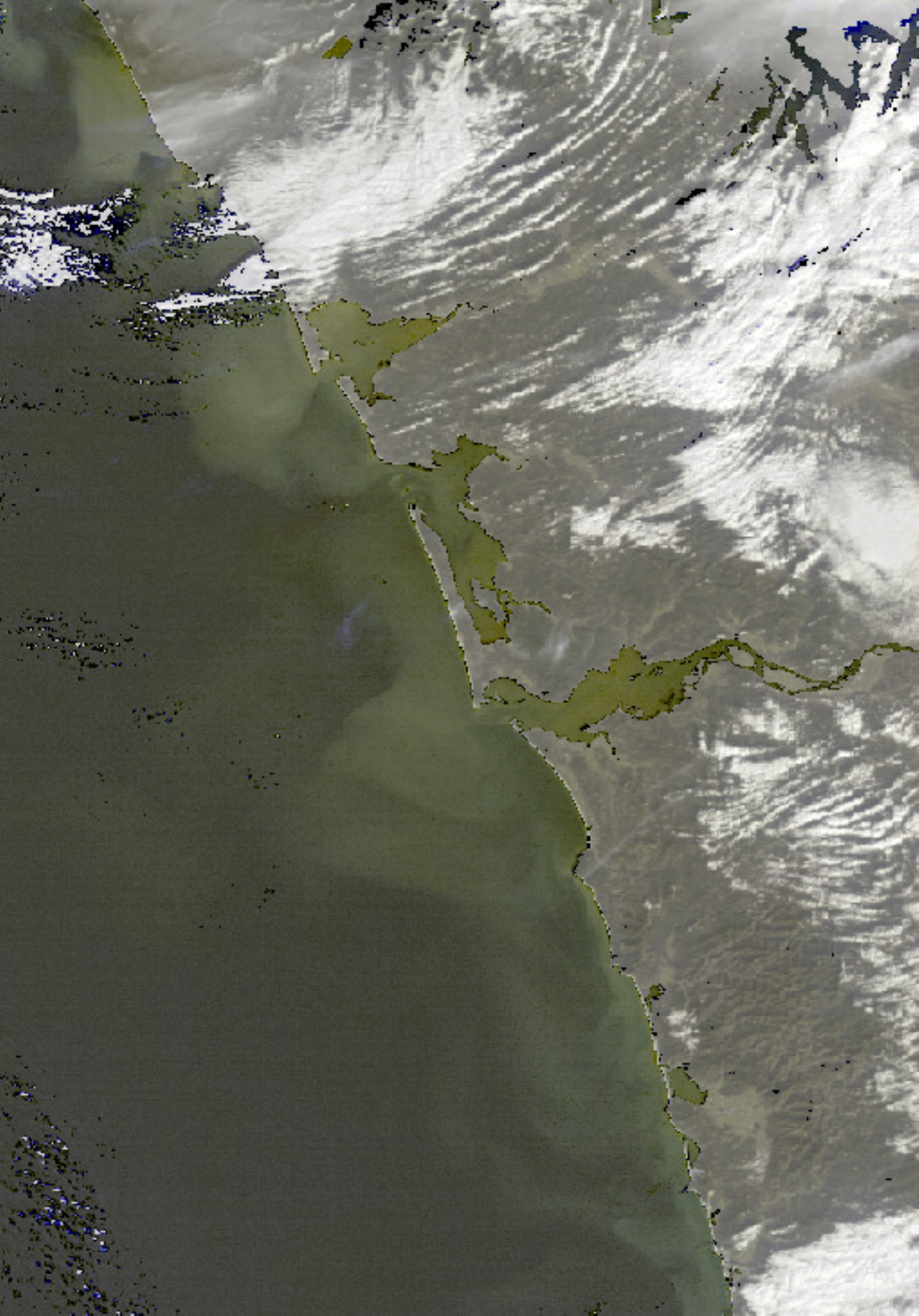}}
\end{minipage}
\caption{A quasi-true color satellite image from HICO instrument, \cite{HICO} (top), of the Columbia River mouth taken on July 8, 2010. High plankton densities shown by green coloring of the water. Spatial resolution is fine enough that a boat is clearly visible in the upper half of the image. Bottom: Quasi-true color image of same region taken during an algal bloom on December 12, 2009 by the MERIS instrument on the ENVISAT satellite.}
\label{fig:figSat}
\end{center}
\end{figure}

In many systems, it is quite reasonable to expect that model parameters need not be spatially homogeneous. And therefore, taking our problem of interest, spatial inhomogeneity in parameter values may be an important theoretical assumption when constructing models for coastal algal blooms, since the plankton growth rate is affected by near-shore nutrient runoff and upwelling \cite{thomann1975mathematical,M02,mattern2012estimating}. More to that point, ocean fronts and eddies cause flow-induced long-term inhomogeneities in the ocean which results in a formidable spatial structure for density profiles in the ocean \cite{M02}. Whether inhomogeneities be the result of the flow dynamics or of boundary conditions from nutrient runoff, they are an important consideration for modelling ecology over large coastal domains. Thus, depending on the scale and resolution, it may be prudent to include spatially dependent parameters.

Therefore, we develop synthetic datasets with spatially varying parameters to challenge our methods. To push our methods we add random noise to each parameter as displayed in Figure \ref{fig:Parameters}. Spatially dependent parameters are chosen to be in the range given in \cite{M02} for spatially irregular behavior. Three different functional forms for the parameters are tested for variety. First, we define a Gaussian parameter function,

\begin{eqnarray}\label{eq:Gauss}
k_1(x,y) &=& a e^{-\left(\frac{(x-n/2)^2}{2\sigma^2} + \frac{(y-m/2)^2}{2\sigma^2}\right)}, \ \ \ \textrm{and} \\
\nonumber m_1(x,y) &=& c e^{-\left(\frac{(x-n/2)^2}{2\sigma^2} + \frac{(y-m/2)^2}{2\sigma^2}\right)},
\end{eqnarray}
where $a = 2, c = 0.6, m = 300, n = 900$, and $\sigma = 400$.  Appropriate parameters are chosen to maintain $m(x,y)$ and $k(x,y)$ in the target range. Figure \ref{fig:Parameters} shows the three parameter forms discussed above, where only $k(x,y)$ is plotted since the parameters differ by a scalar multiple. For example, Eq (\ref{eq:Gauss}) is displayed in Figure \ref{fig:k_gauss}. Next, we define,

\begin{eqnarray}\label{eq:Sineplot}
k_2(x,y) &=& a \ \cos(b x + d)\sin(by) + s, \ \ \ \textrm{and}\\
\nonumber m_2(x,y) &=& c \ \cos(b x + d)\sin(by) + t,
\end{eqnarray}
where $a = 0.2, b = \pi/(m/2), c = 0.6, d = \pi/2, s = 0.5$, and $t = 1.5$, to test the quality of the autosynchronization method to resolve fine spatial structures in model parameters. The surfaces produced by Eq (\ref{eq:Sineplot}) are displayed in Figure \ref{fig:k_sin}. 

Finally, we build a swirly parameter function in order to simulate spiral-like behavior in parameter values as might be expected in turbulent coastal regions. A time instance is sampled from a simulation of the original PDE, Eq (\ref{eq:Fish}), is scaled appropriately, and is treated as a parameter function. These spiral parameters, $k_3(x,y)$, are shown in Figure \ref{fig:k_swirl}. 

\begin{figure}[!h]
\centering \label{fig:Parameters}
\subfloat[][]{\includegraphics[width=.45\textwidth]{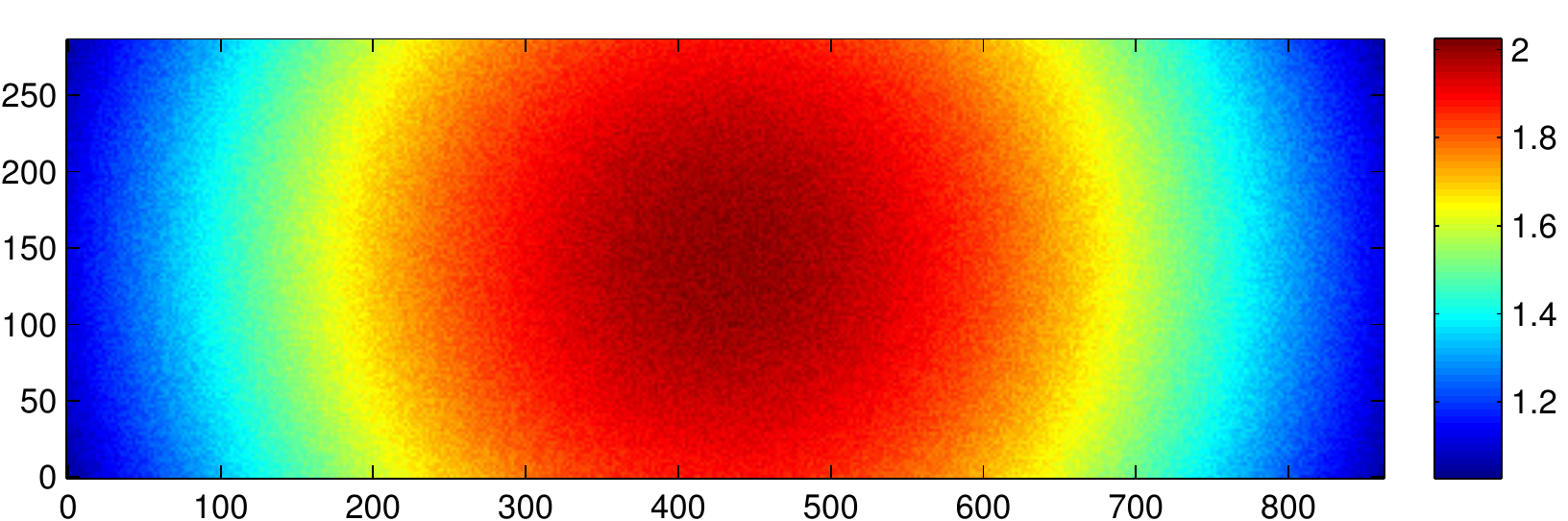}\label{fig:k_gauss}}\\
\subfloat[][]{\includegraphics[width=.45\textwidth]{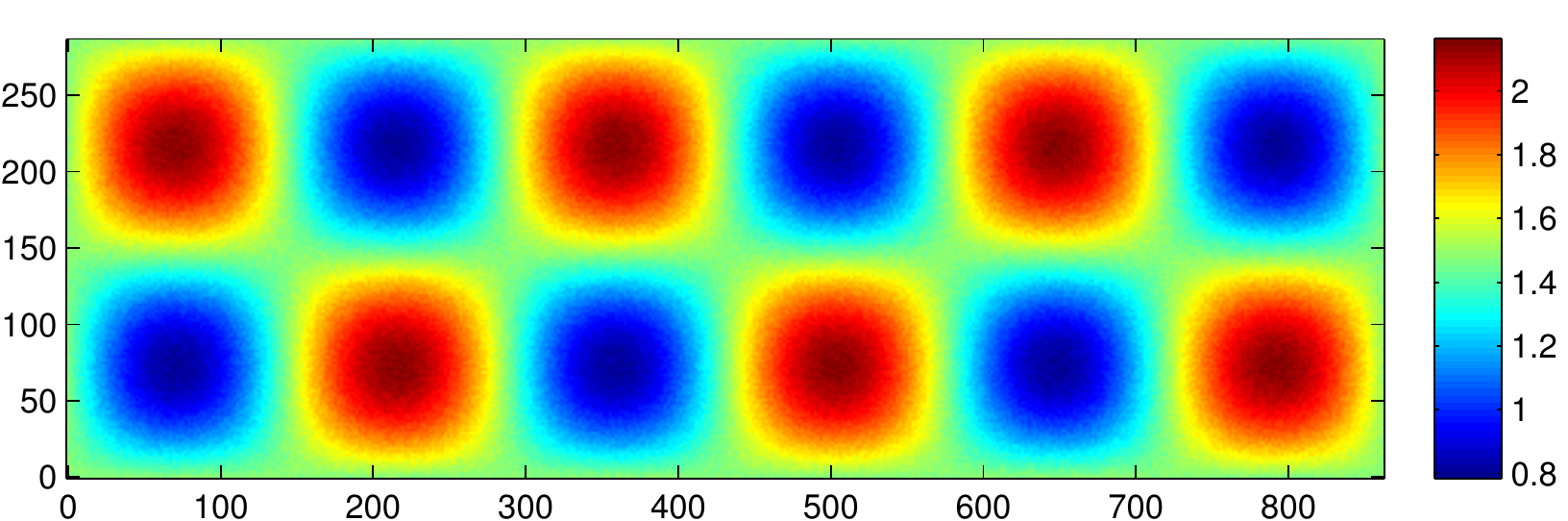}\label{fig:k_sin}}\\
\subfloat[][]{\includegraphics[width=.45\textwidth]{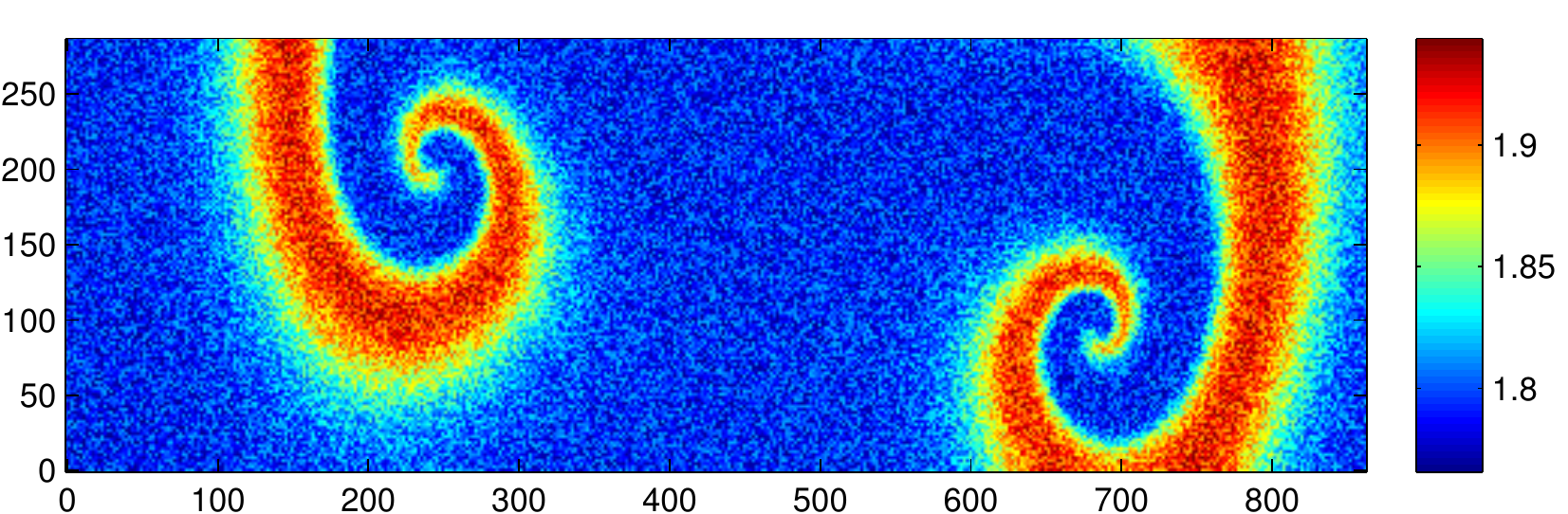}\label{fig:k_swirl}}
\caption{The three different forms spatially dependent parameters used in simulations with apparent noise included. Since $k(x,y)$ and $m(x,y)$ are simply scalar multiples, we plot only $k(x,y)$ for each form. Figure \ref{fig:k_gauss} is described by Eq (\ref{eq:Gauss}). The parameters described by Eq (\ref{eq:Sineplot}) are shown in Figure \ref{fig:k_sin}. Finally, the swirly parameters are shown in Figure \ref{fig:k_swirl}.}
\label{fig:Parameters}
\end{figure}

We discretize the modified system, Eq \eqref{eq:Fish}, with explicit finite differences, using a five-point center difference stencil for spatial derivatives and forward Euler time stepping. The spatial and temporal step sizes are chosen as $dx = 2$ and $dt = 0.2$. The model output $P(x,y,t)$ is treated as an image sequence given by a particular (known) model form but with parameters $k(x,y)$ and $m(x,y)$ and component function $Z(x,y,t)$ to be determined. 

In order to properly mimic our target application of remote sensing oceanographic data of hyperspectral images filtered to reveal plankton blooms, we add random noise and ``moving cloud cover" to the dataset by occluding large proportions of the image from direct observation. Clouds are a natural occurrence when studying a large terrestrial area over several days, and luckily the clouds tend to move.  


\section{Autosynchronization} \label{sec:autosync}
Two model systems are required in order to estimate unknown model states and parameters by autosynchronization, a drive system and response system.  One-way direct replacement and diffusive coupling are combined so that observables are coupled directly into the response model as it evolves. Samples are taken from the drive system,
\begin{equation} \label{eq:DriveModelPDE}
\mathbf{u_t}(x,y,t) = \mathbf{f(u}(x,y),\mathbf{p}(x,y)),
\end{equation} 
with parameters $\mathbf{p}(x,y) \in C^0(\Omega)$ and $u \in H^2 (\Omega)$. A response system is formed,
\begin{equation} \label{eq:ResponseModelPDE}
\mathbf{v_t}(x,y,t) = \mathbf{g(u}(x,y),\mathbf{v}(x,y),\mathbf{q}(x,y)),
\end{equation} 
with $\mathbf{q}(x,y) \in C^0(\Omega)$. 
We formulate an associated system of PDEs for the parameters of Eq \eqref{eq:ResponseModelPDE},
\begin{equation} \label{eq:ParameterEqn}
\mathbf{q_t}(x,y,t) = \mathbf{s(u}(x,y),\mathbf{v}(x,y)),
\end{equation} 
with the goal that $\mathbf{(v,q)} \rightarrow \mathbf{(u,p)}$ as $t \rightarrow \infty$. If successful, the method is called autosynchronization \cite{KP96} since the parameters are evolved deterministically along with the response model.

Generally, some model variables from the drive system need not be sampled. For a two-species system, we write \eq{\textbf{u}(x,y,t) = (u1(x,y,t), u2(x,y,t))^T} and we do not require that \eq{u2(x,y,t)} is sampled. An associated response system \eq{\textbf{v}(x,y,t) = (v1(x,y,t), v2(x,y,t))^T} is built wherein both equations are fed samples from \eq{u1(x,y,t)}. A schematic diagram for this type of simulation might be helpful and is found in Figure \ref{fig:box1}, where dots denote time derivatives. 

\begin{figure}[!h]
\begin{center} 
{\includegraphics[width=.5\textwidth, height = 5cm]{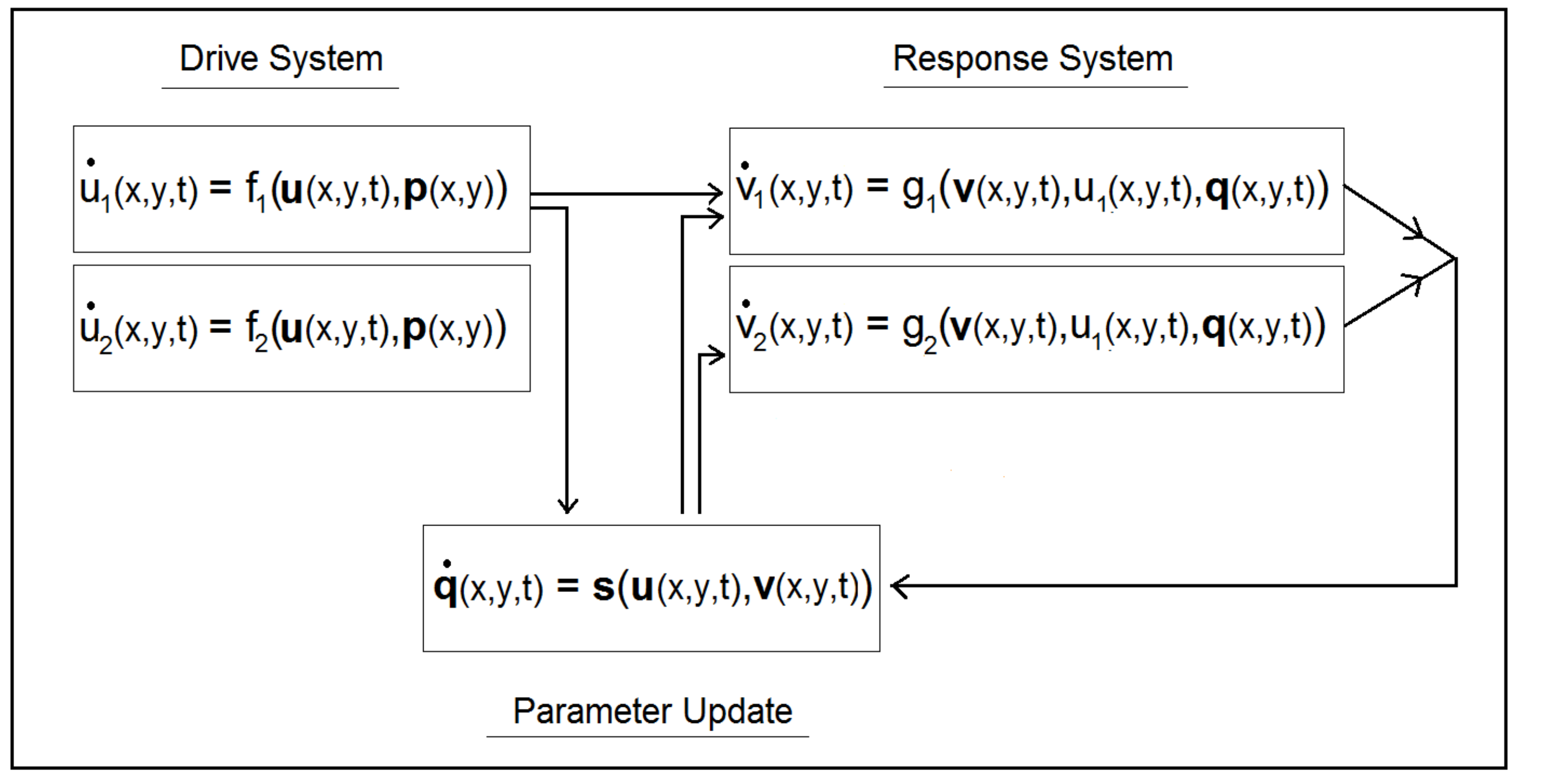}}
\caption{Diagram for autosynchronization of two-component PDE system such as described by Eqs \eqref{eq:DriveModelPDE} - \eqref{eq:ParameterEqn}.}
\label{fig:box1}
\end{center}
\end{figure}

For our synthetic dataset given by Eq \eqref{eq:Fish} with parameters Eq \eqref{eq:Gauss} or Eq \eqref{eq:Sineplot}, we form a response system to be synchronized to the observations as,

\begin{eqnarray}\label{eq:FishPhyto}
\nonumber \frac{\partial \hat{P}}{\partial t} &=& \triangle \hat{P} + \hat{P}(1-\hat{P}) - \frac{\hat{P} \hat{Z}}{\hat{P} + h} + \kappa(P-\hat{P}), \ \ \ \textrm{and} \\
 \frac{\partial\hat{Z}}{\partial t} &=& \triangle \hat{Z} + \hat{k}\frac{\hat{P} \hat{Z}}{P+h} - \hat{m}\hat{Z},
\end{eqnarray}
where we assume $\hat{P}(x,y,0) \neq P(x,y,0)$, $\hat{Z}(x,y,0) \neq Z(x,y,0)$, $\hat{k}(x,y,0) \neq k(x,y)$, and $\hat{m}(x,y,0) \neq m(x,y)$.

Parameters are updated as diffusively coupled PDEs during the synchronization process as,
\begin{eqnarray}\label{eq:ParUpdate}
\nonumber \frac{\partial \hat{k}}{\partial t} &=& -s(P - \hat{P}), \ \ \ \textrm{and} \\
 \frac{\partial\hat{m}}{\partial t} &=& -s(P - \hat{P})\hat{P}, 
\end{eqnarray} 
where $s = 30$ and $\kappa = 2.4$ are chosen for specificity and for which we observe good convergence results. For these experiments, we sample the drive system at every time step, but note that a larger sampling time will work \cite{kramer2013spatially}. The parameter equations are evolved simultaneously by Eq \eqref{eq:FishPhyto} with a forward Euler discretization and the same time step. As we vary $s$ and $\kappa$, autosynchronization may fail as commonly observed with diffusively-coupled systems. 
 
To begin the simulation, parameters are initialized as the constant function, e.g. $\hat{k}(x,y,0) = 5$ and $\hat{m}(x,y,0) = 5$. We evolve Eq \eqref{eq:Fish} forward and count the model output as observed data. Initial conditions for the response system are $\hat{P}(x,y,0) = 2$ and $\hat{Z}(x,y,0) = 2$. Furthermore, to avoid values outside the normal range of Eq \eqref{eq:Fish}, we enforce that \\[-1.0cm] 
\begin{center}
\begin{minipage}[h]{4in}
\begin{parcolumns}[colwidths={1=1 in},nofirstindent]{2}
\colchunk[1]{\begin{eqnarray}
\nonumber \hat{P} = \Bigg\{ \begin{matrix} \hat{P}&:& 0 < \hat{P} < 2 \\0&:& \hat{P} \leq 0\\2&:& \hat{P} \geq 2 \end{matrix} \textrm{   \hskip .4cm   and}
\end{eqnarray}}

\colchunk[2]{\begin{eqnarray}
\nonumber \hat{Z} = \Bigg\{ \begin{matrix} \hat{Z}&:& 0 < \hat{Z} < 2 \\0&:& \hat{Z} \leq 0\\2&:& \hat{Z} \geq 2 \end{matrix}
\end{eqnarray}}
\end{parcolumns}
\end{minipage}
\end{center}

\noindent $\forall x,y \in \Omega$ during the simulation.  As noted above, autosynchronization is observed for the test set of parameters in Figure \ref{fig:Parameters}  and the spatial inhomogeneities in each case are effectively resolved. We emphasize that zooplankton are not observed in Eq \eqref{eq:FishPhyto}-\eqref{eq:ParUpdate}.


\section{Hidden Data}

Ocean-observing satellite imagery often includes significant amounts of cloud cover \cite{martin2014introduction}. In other words, a large fraction of that data may be occluded. Furthermore, we have found that \textit{level 2} mapped and processed images may include striping or other defects from projecting a sphere onto a uniform grid. The lack of data presents a challenge to data assimilation and model filtering by synchronization methods. Suppose $\omega \subset \Omega$ is the set of unobservable data. We allow for $\omega = \omega(x,y,t)$ so that the set of unobservables varies with space and time like a cloud. We consider a simple case where the dynamics of $\omega(x,y,t)$ are governed by the advection equation
\begin{equation}
\nonumber \frac{\partial \omega}{\partial t} + \nu \frac{\partial \omega}{\partial x} = 0,
\end{equation}
with periodic boundary conditions, so that clouds move in the x-direction with speed $\nu$. We couple the systems only on the complement of $\omega$. That is, we turn the driving signal off when the image is unobservable, allowing the two systems to oscillate independently, and switch it on after the clouds have passed. We do this only in the subregion $\omega \subset \Omega$ that is unobservable in order that data contained in the complement of $\omega$ may continue to be driven by observables toward the synchronization manifold.

Here we build on the method described in section \ref{sec:autosync}, where zooplankton densities and model parameters are estimated by observing solely the phytoplankton. Now we observe phytoplankton and clouds. However, if we couple at every spatial grid point, the synchronization manifold is de-stabilized by incident cloud coverage. With a large enough amount of cloud coverage over $\Omega$, the systems fail to synchronize. We say large enough in deference to the case where the occluded region is small enough such that diffusion allows information to pass into any hidden regions. 

As a remedy we allow the drive and response models to oscillate independently, or uncoupled, while the drive model is hidden by clouds. The pixels representing cloud cover in remote sensing data are typically set to some large fixed integer, $I$. We represent this formally,

\begin{eqnarray}\label{eq:Fish_Sync_Phyto_clouds}
\nonumber \frac{\partial \hat{P}}{\partial t} &=& \triangle \hat{P} + \hat{P}(1-\hat{P}) - \frac{\hat{P} \hat{Z}}{\hat{P} + h} + \kappa  (H[P] - \hat{P}), \ \ \ \textrm{and}\\
 \frac{\partial\hat{Z}}{\partial t} &=& \triangle \hat{Z} + k\frac{\hat{P} \hat{Z}}{\hat{P} +h} - m\hat{Z},
\end{eqnarray}
 where $H[P]$ represents a \textit{switching} function given by
\begin{minipage}[h]{=\textwidth}
\begin{parcolumns}[colwidths={1=3 in},nofirstindent]{2}
\colchunk[1]{\begin{eqnarray}
\nonumber H[P] = \left\{ \begin{matrix} \hat{P}&,&  P = I \\P&,& P \neq I. \end{matrix} \right.
\end{eqnarray}}
\end{parcolumns}
\end{minipage}
\\

The form of response model switches off the coupling when a cloud mask is detected in the image and allows the systems to oscillate independently in the corresponding pixels, while being driven over pixels that are observed. Eq (\ref{eq:Fish_Sync_Phyto_clouds}) is slightly different from temporal subsampling of data, where models are not coupled for a given number of time steps. Here the models are always coupled somewhere in $\Omega$, which is determined by time-varying clouds. 

\begin{figure}
\centering
\subfloat[][]{\includegraphics[width=.25\textwidth]{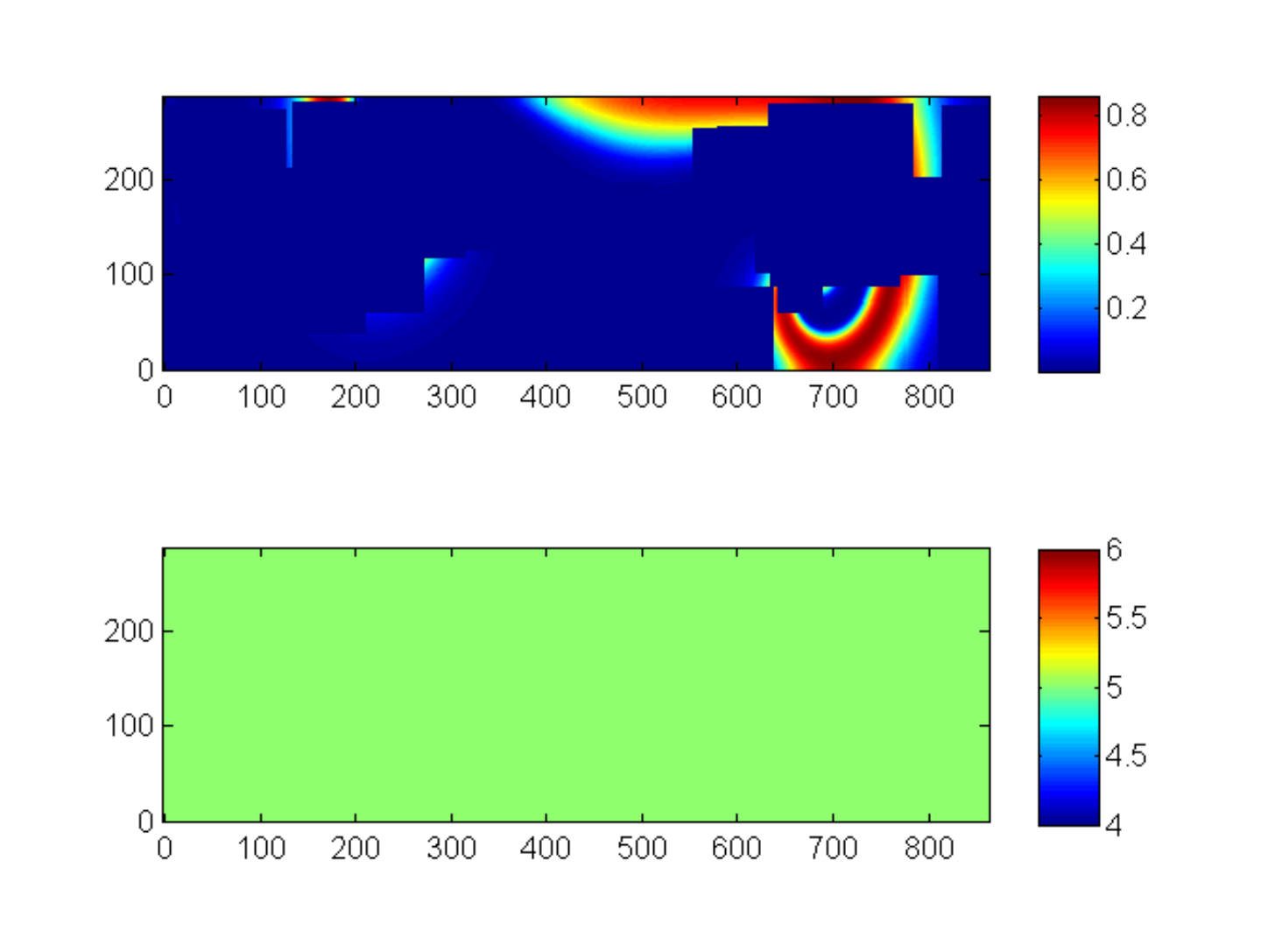}\label{fig:Sync_phyto_clouds_IC}}
\subfloat[][]{\includegraphics[width=.25\textwidth]{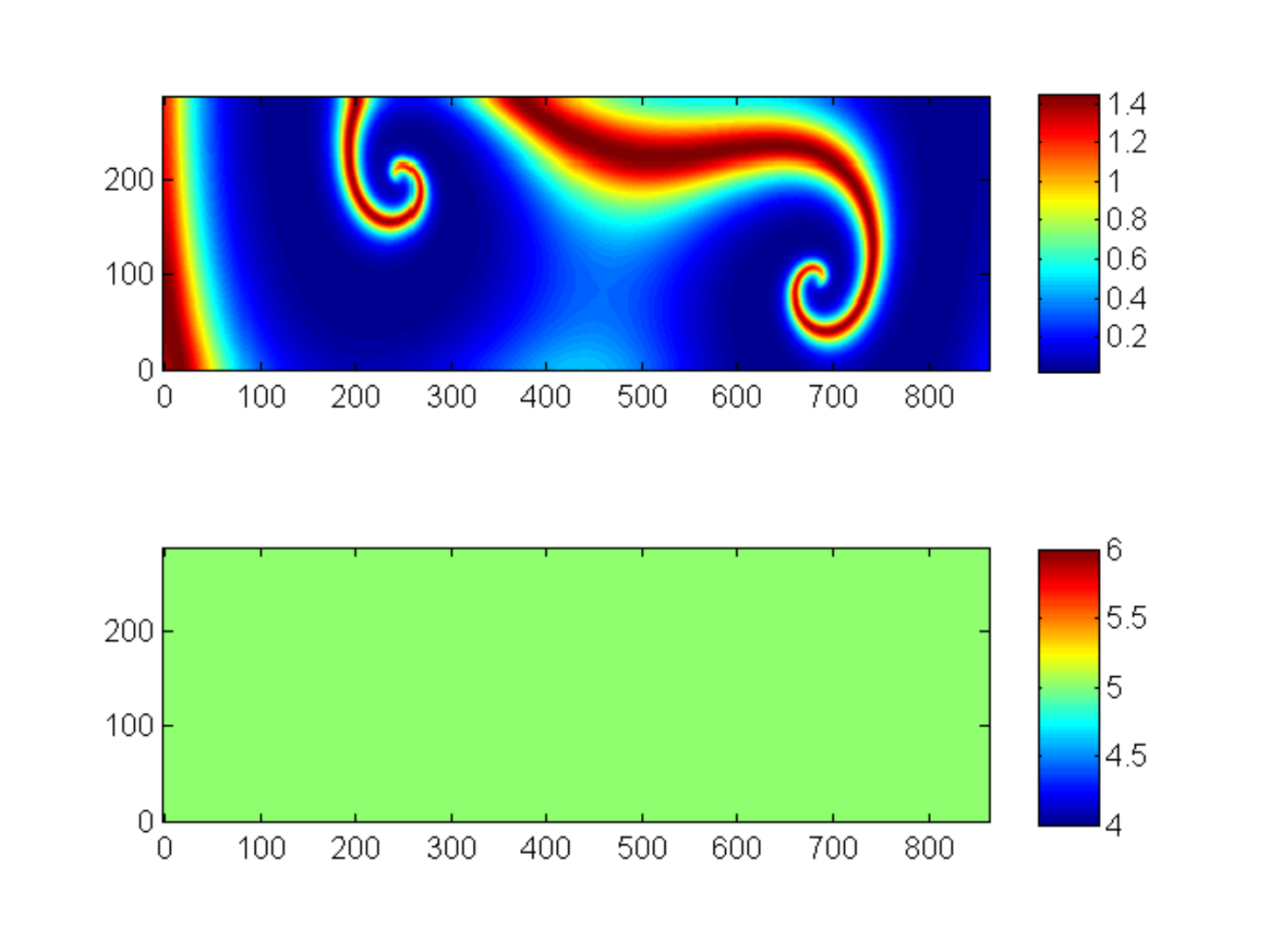}\label{fig:Sync_zoo_clouds_IC}}\\[-.4cm]
\subfloat[][]{\includegraphics[width=.25\textwidth]{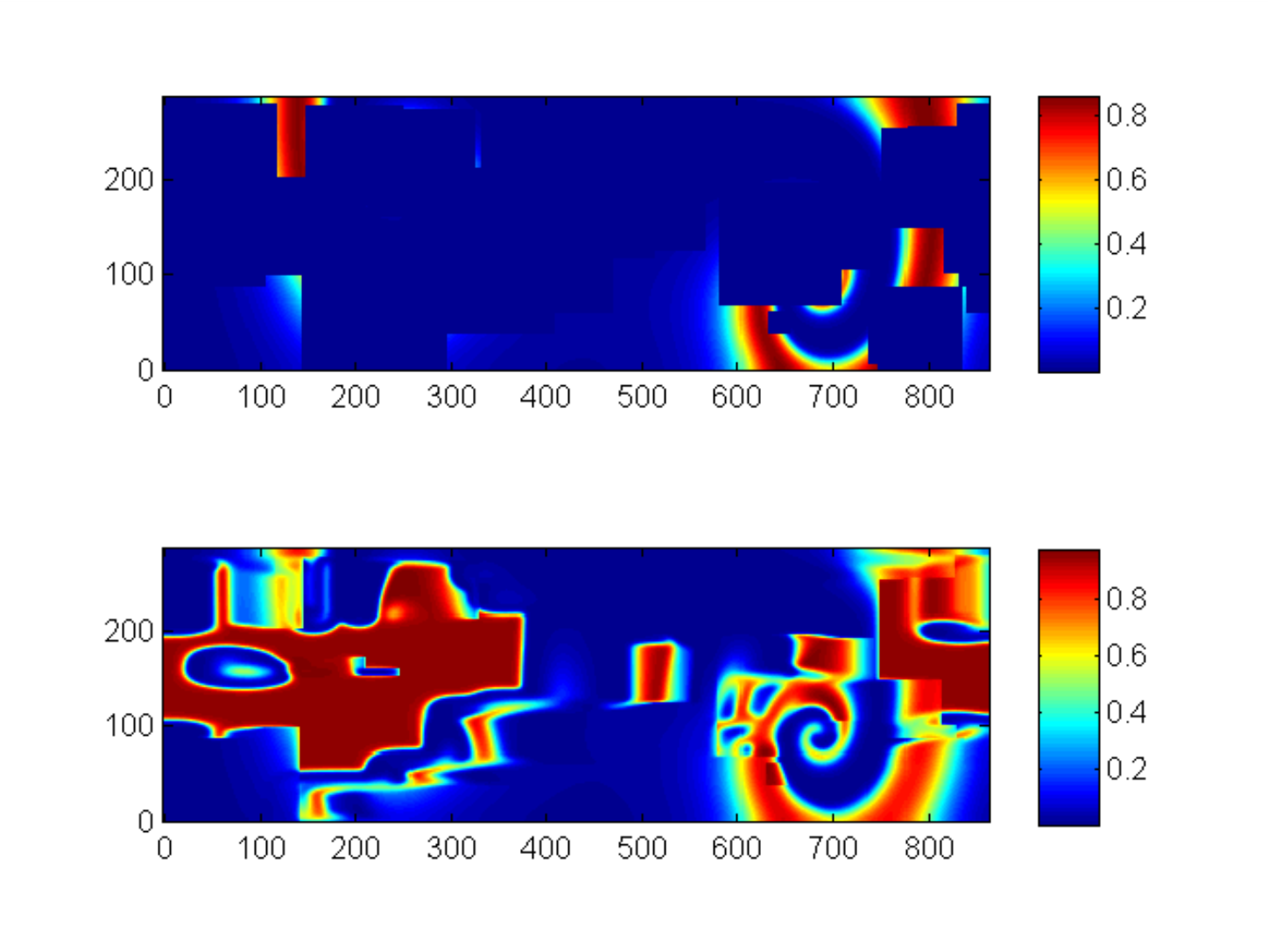}\label{fig:Sync_phyto_clouds_100}}
\subfloat[][]{\includegraphics[width=.25\textwidth]{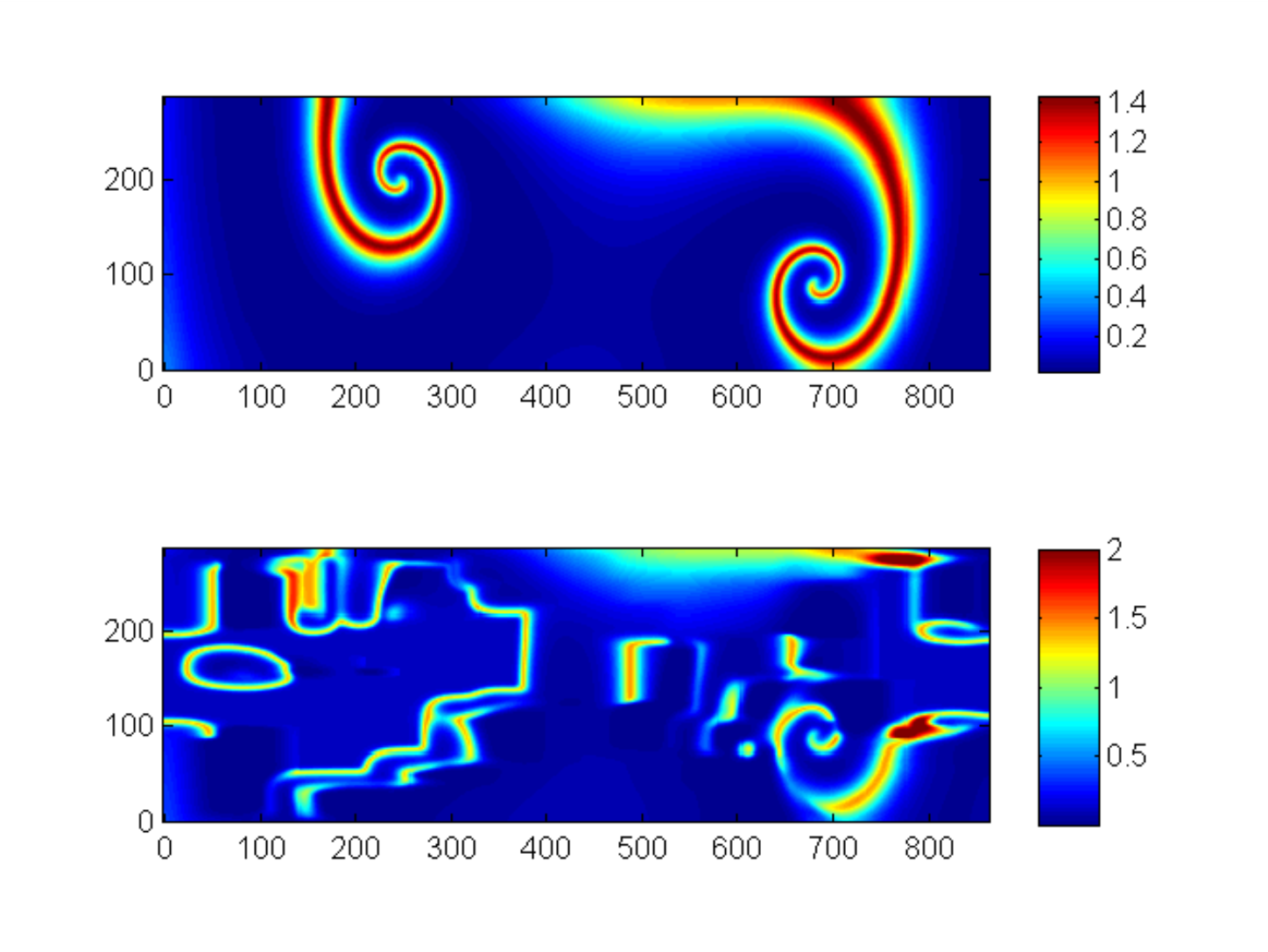}\label{fig:Sync_zoo_clouds_100}}\\[-.4cm]
\subfloat[][]{\includegraphics[width=.25\textwidth]{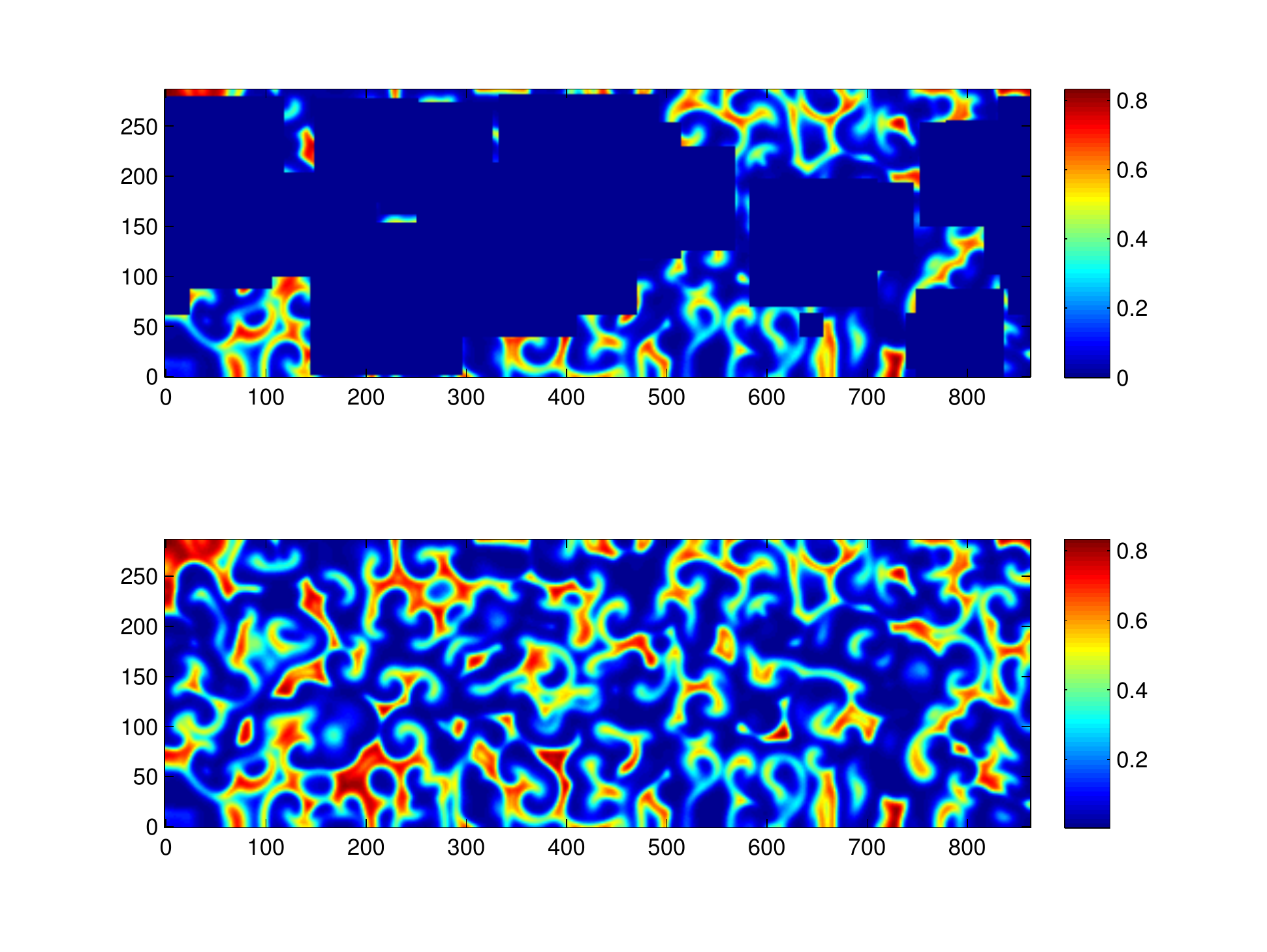}\label{fig:Sync_phyto_clouds_FC}}
\subfloat[][]{\includegraphics[width=.25\textwidth]{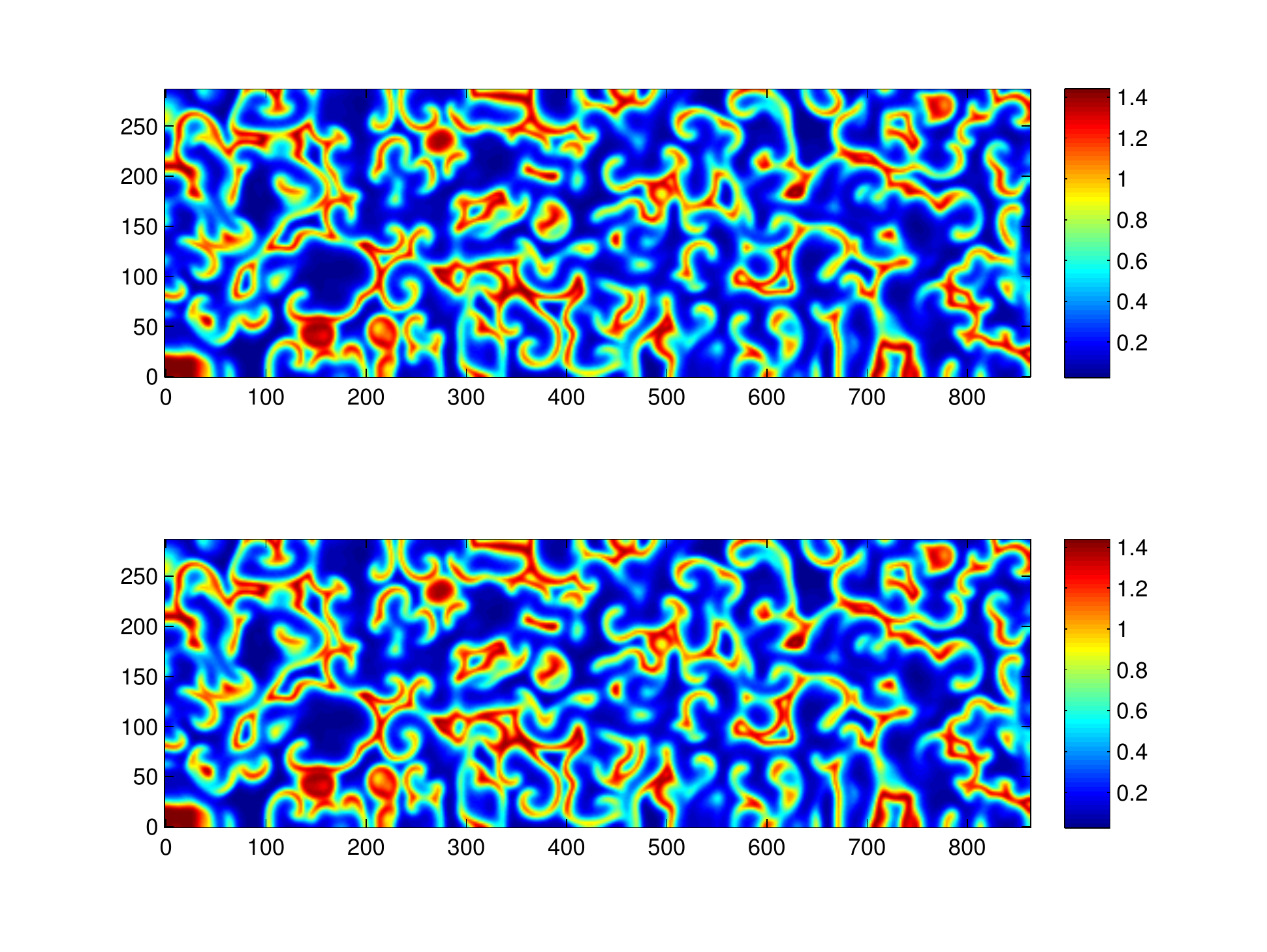}\label{fig:Sync_zoo_clouds_FC}}
\caption{Synchronization of response system shown at $t=0$, $t=20$, and $t = 12000$. Here $65.8 \%$ of $\Omega$ is hidden at any point in time from clouds, however identical synchronization is observed. Each figure shows drive (top) and response (bottom) pairs. $P(x,y,0)$ and $\hat{P}(x,y,0)$ in \ref{fig:Sync_phyto_clouds_IC}, $P(x,y,20)$ and $\hat{P}(x,y,20)$ in \ref{fig:Sync_phyto_clouds_100}, and $P(x,y,12000)$ and $\hat{P}(x,y,12000)$ in \ref{fig:Sync_phyto_clouds_FC}. $Z(x,y,0)$ and $\hat{Z}(x,y,0)$ in \ref{fig:Sync_zoo_clouds_IC}, $Z(x,y,20)$ and $\hat{Z}(x,y,20)$ in \ref{fig:Sync_zoo_clouds_100}, and $Z(x,y,12000)$ and $\hat{Z}(x,y,12000)$ in \ref{fig:Sync_zoo_clouds_FC}.}
\label{fig:Clouds_Phyto}
\end{figure}

\begin{figure}
\centering
\includegraphics[width = .45 \textwidth]{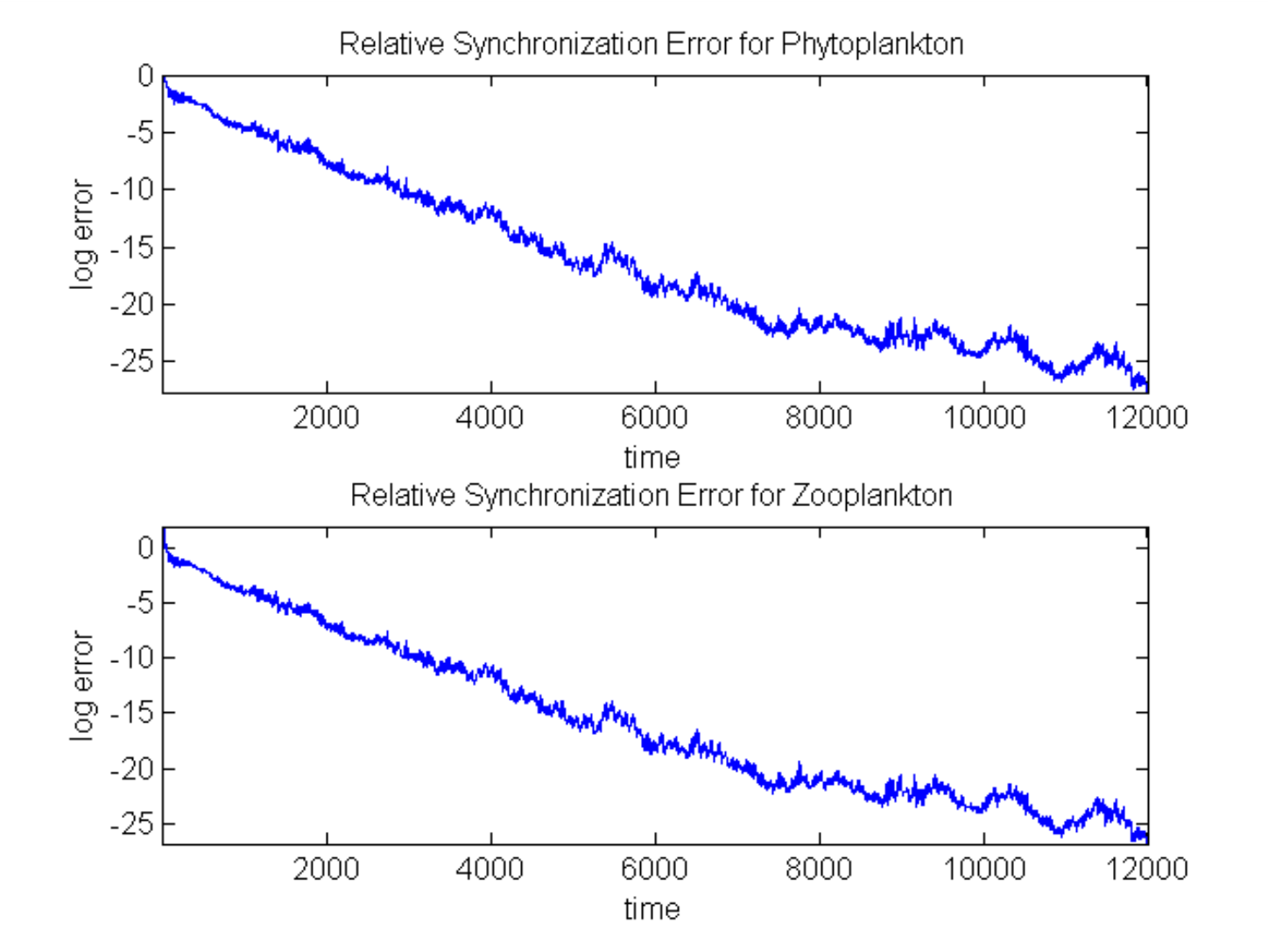}\label{Sync_Error_Clouds}
\caption{Globally-averaged relative synchronization errors. Errors given by simulation shown in Figure \ref{fig:Clouds_Phyto} decrease to less than $2.6 \times 10^{-12}$ despite ever-present clouds.}
\label{fig:Clouds_Sync_Error} 
\end{figure}

We first demonstrate model state synchronization given occluded data before addressing the estimation of model parameters. Let Eq (\ref{eq:Fish}) be the drive model and Eq (\ref{eq:Fish_Sync_Phyto_clouds}) be the response model. Figure \ref{fig:Clouds_Phyto} represents a partially observed dataset from Eq (\ref{eq:Fish}), with a field of 30 randomly placed synthetic clouds evolving from left to right with periodic boundary conditions resulting in $65.8 \%$ of $\Omega$ occluded at all times. The clouds repeatedly scroll from left to right and parts of the image are always occluded, but every element in the domain is eventually driven, causing the drive and response to systems to synchronize. The response system is initialized by $\hat{P}(x,y,0) = 2$ and $\hat{Z}(x,y,0) = 2$, and we choose $\kappa = 2.6$. 

Once synchronized, even hidden phytoplankton are revealed for initializing short-term forecasts, demonstrating the utility of this result. Figure \ref{fig:Clouds_Sync_Error} demonstrates that despite $65.8 \%$ of the drive system hidden, the two PDE systems eventually evolve toward identical synchronization. In Figures \ref{fig:Sync_phyto_clouds_FC} and \ref{fig:Sync_zoo_clouds_FC} nearly all evidence of clouds is ``synchronized away" from the response system and the globally averaged error between the two has been driven to be less than $2.6 \times 10^{-12}$. We remark that the choice of coupling strength, $\kappa$, varies with the amount of data occluded. 

Given a model form, we advance the method to sample a single species toward parameter estimation and nonlinear data assimilation for a two-species PDE model, regardless of clouds. That is, by stating the response system
\begin{eqnarray}\label{eq:Fish_Auto_Phyto_Clouds}
\nonumber \frac{\partial \hat{P}}{\partial t} &=& \triangle \hat{P} + \hat{P}(1-\hat{P}) - \frac{\hat{P} \hat{Z}}{\hat{P} + h} + \kappa(H[P]-\hat{P}),\\
\nonumber \frac{\partial\hat{Z}}{\partial t} &=& \triangle \hat{Z} + \hat{k}\frac{\hat{P} \hat{Z}}{H[P]+h} - \hat{m}\hat{Z},\\
\nonumber \frac{\partial \hat{k}}{\partial t} &=& s_1(H[P] - \hat{P}), \ \ \ \textrm{and} \\
 \frac{\partial\hat{m}}{\partial t} &=& s_2(H[P] - \hat{P})\hat{P},
\end{eqnarray}
where $k(x,y)$, $m(x,y)$, $Z(x,y,t)$, and $P(x,y,t)|_\omega$  are to be estimated by $\hat{k}(x,y)$, $\hat{m}(x,y)$, $\hat{Z}(x,y,t)$, and $\hat{P}(x,y,t)$ by sampling \textit{only} $P(x,y,t)|_{\omega^C}$. As before, the coupling is turned off completely for the pixels on which clouds are detected. The drive system is still Eq (\ref{eq:Fish}), but we allow for spatially dependent model parameters $k(x,y), m(x,y)$ in the form of Figure \ref{fig:Parameters}. For robustness, we consider that the model parameters need not have the same functional form. We choose $m(x,y)$ defined by Eq \eqref{eq:Sineplot} and $k(x,y)$ with form shown in Figure \ref{fig:k_swirl} and also add random noise to both parameters.

\begin{figure}
\centering
\subfloat[][]{\includegraphics[width=.25\textwidth]{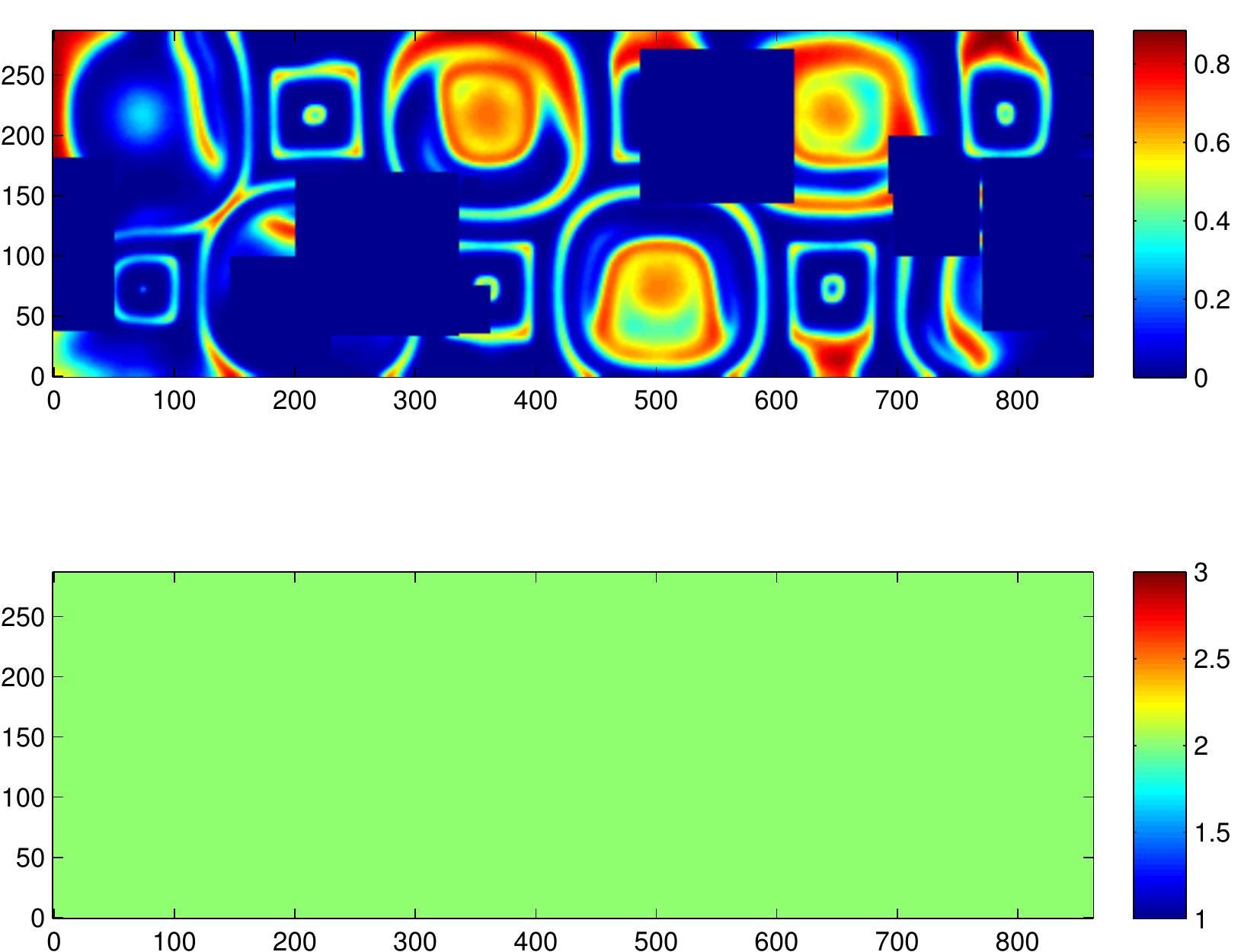}\label{fig:Autosync_phyto_clouds_IC}}
\subfloat[][]{\includegraphics[width=.25\textwidth]{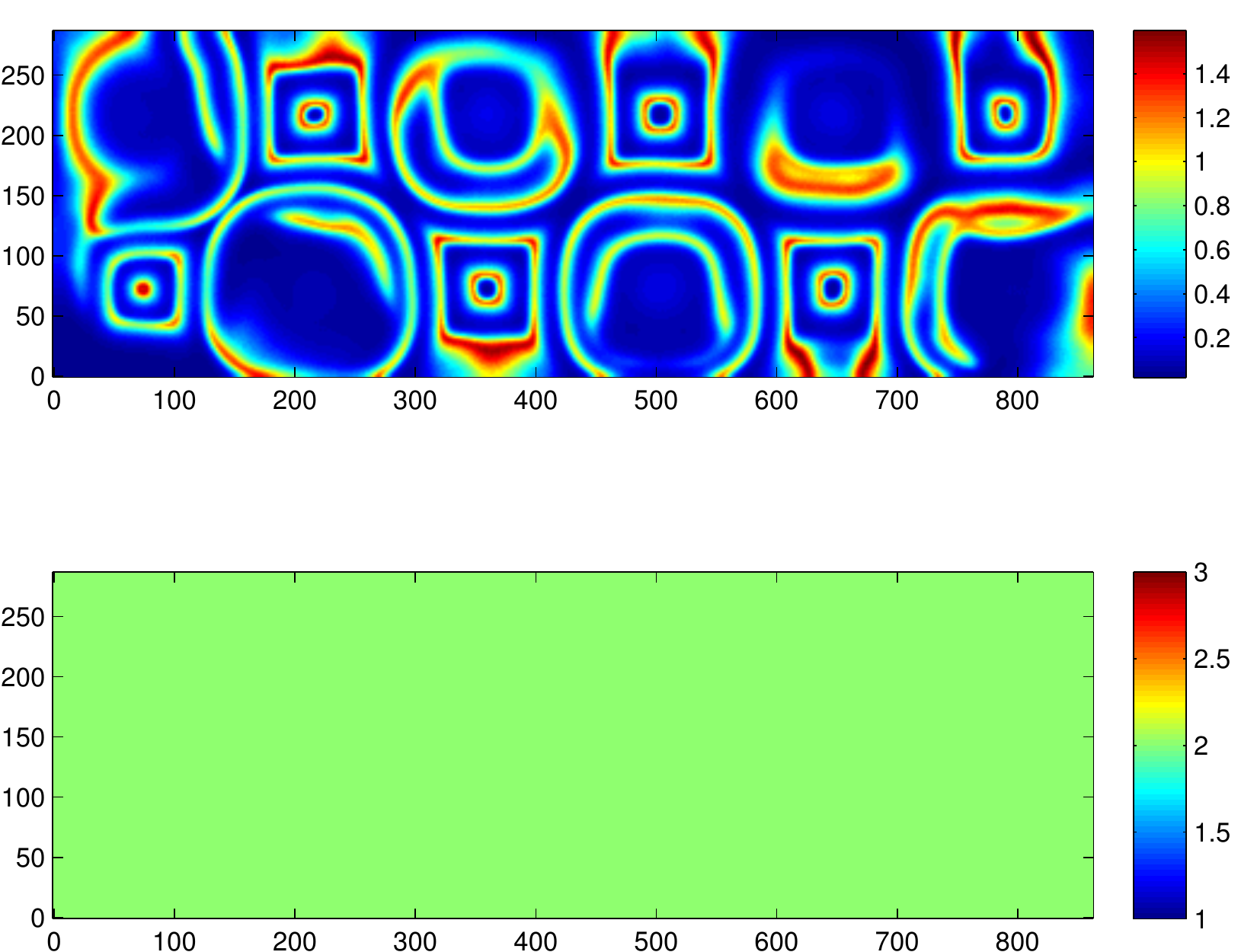}\label{fig:Autosync_zoo_clouds_IC}}\\[-.4cm]
\subfloat[][]{\includegraphics[width=.25\textwidth]{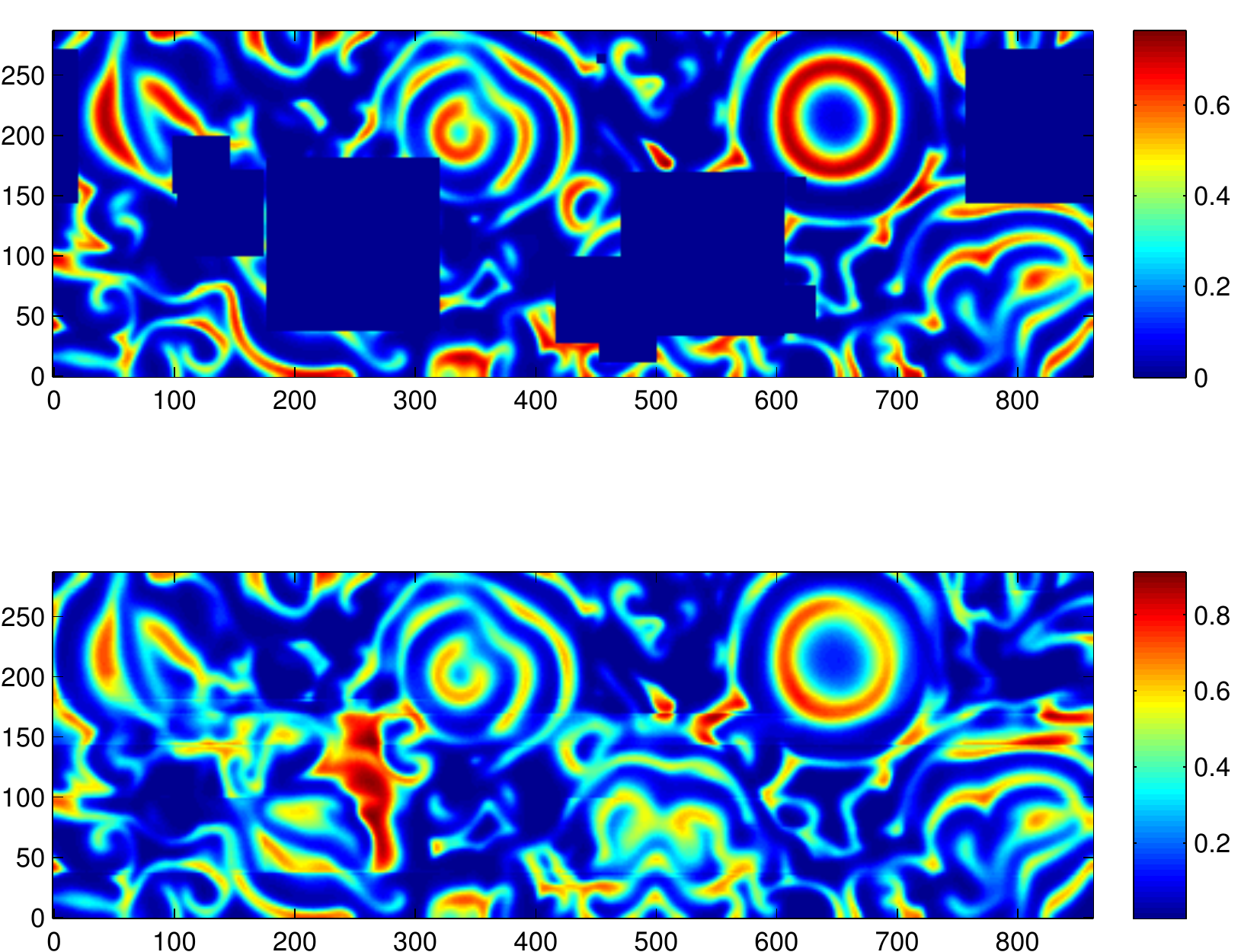}\label{fig:Autosync_phyto_clouds_1000}}
\subfloat[][]{\includegraphics[width=.25\textwidth]{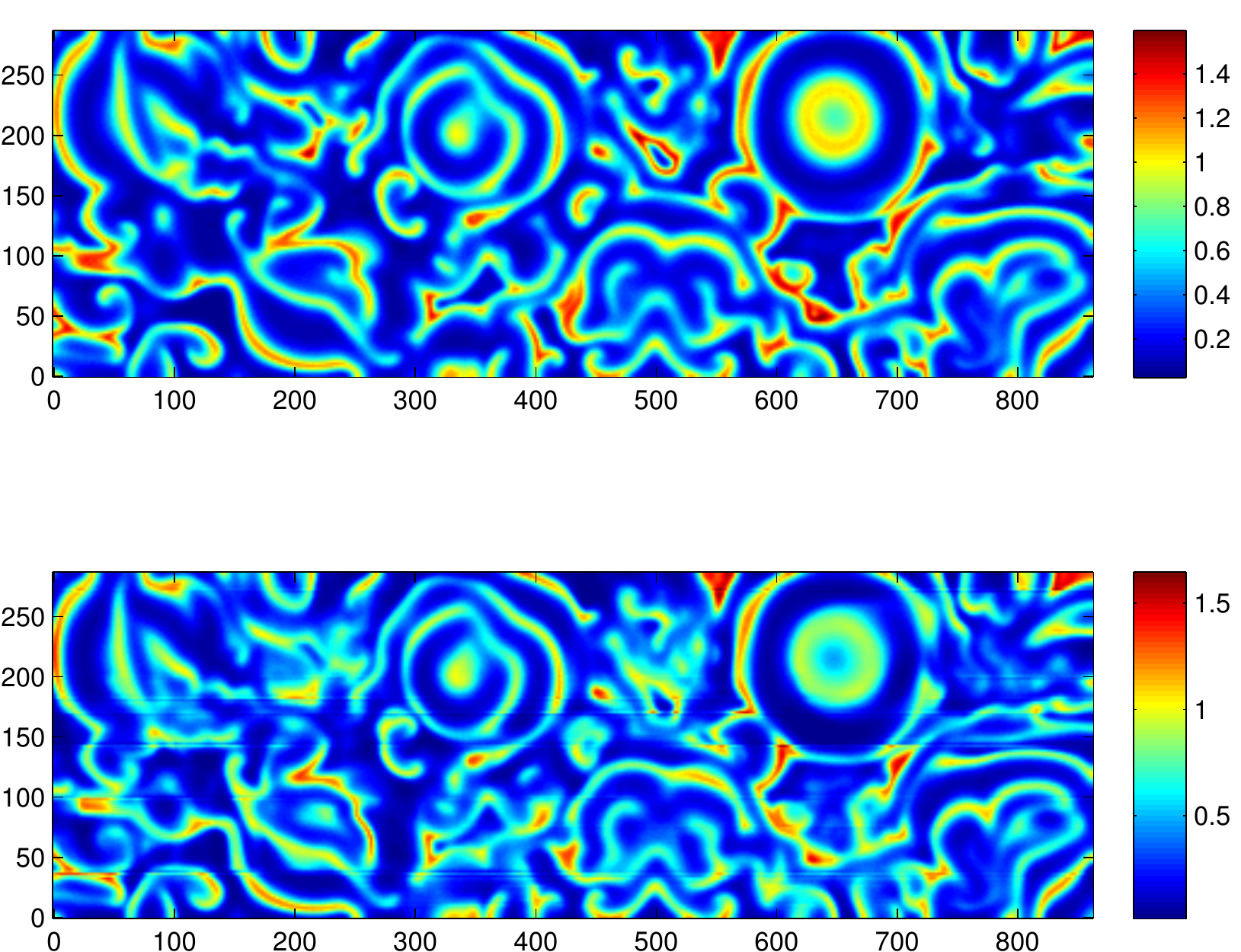}\label{fig:Autosync_zoo_clouds_1000}}\\[-.4cm]
\subfloat[][]{\includegraphics[width=.25\textwidth]{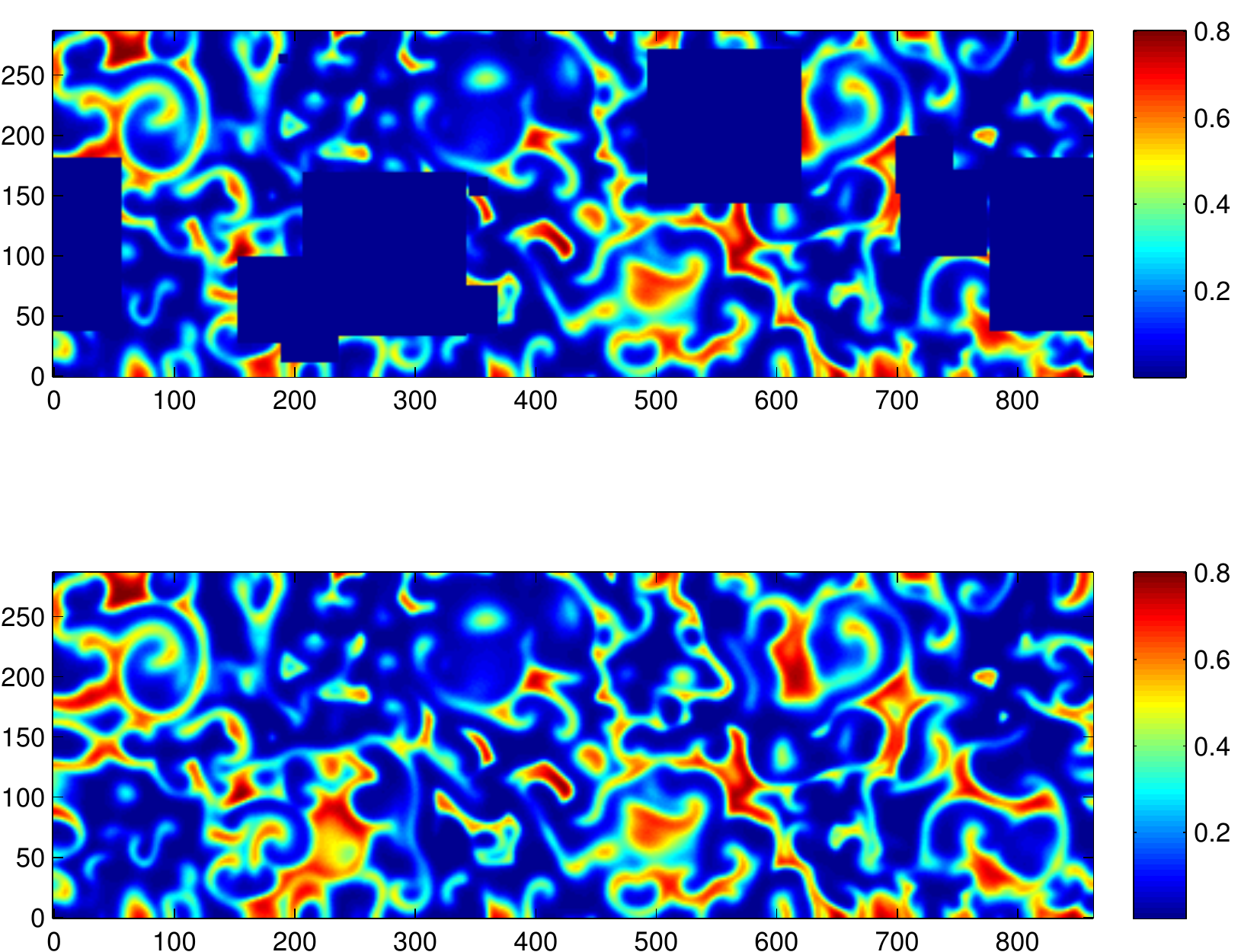}\label{fig:Autosync_phyto_clouds_FC}}
\subfloat[][]{\includegraphics[width=.25\textwidth]{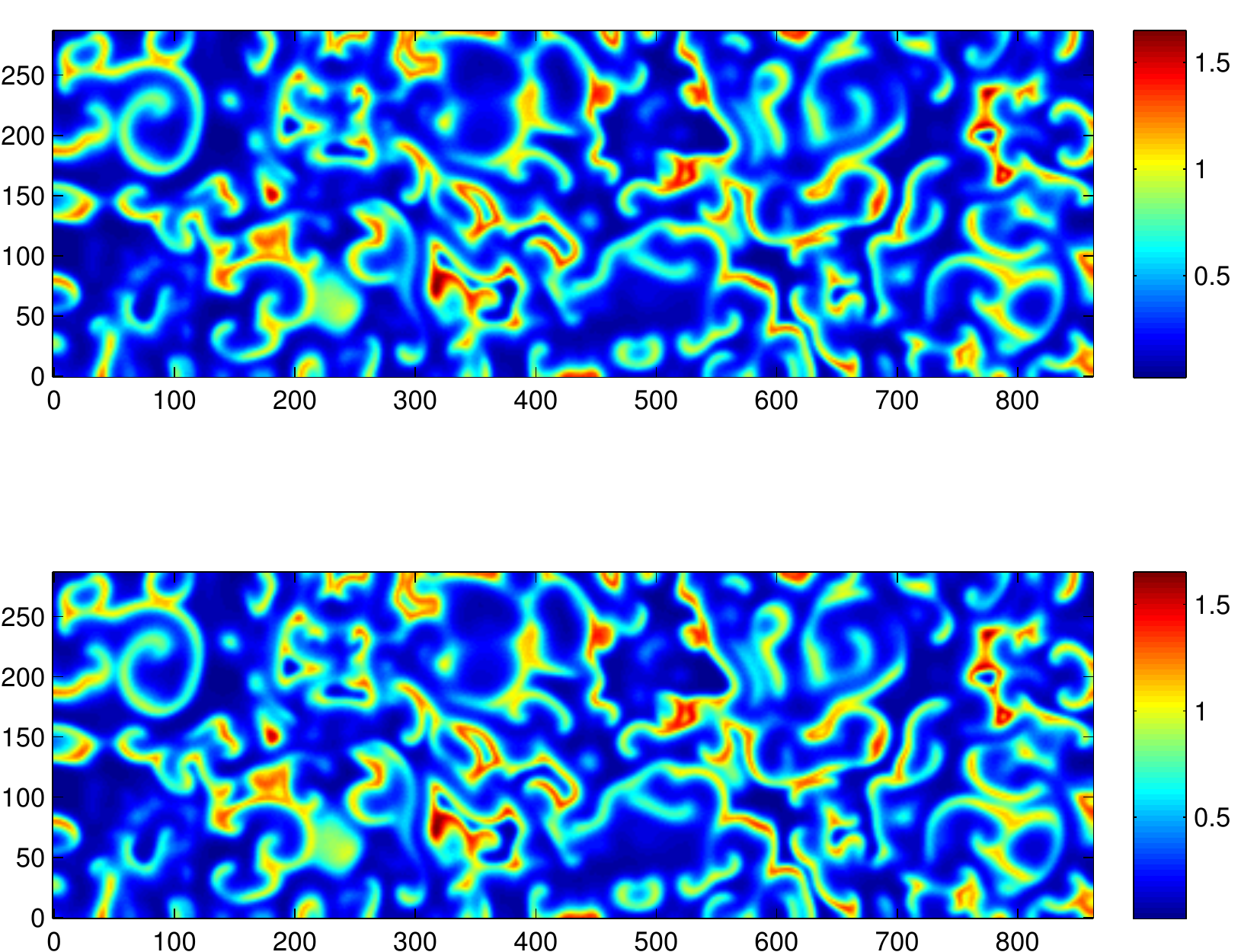}\label{fig:Autosync_zoo_clouds_FC}}
\caption{Autosynchronization of species with $25.5 \%$ of $\Omega$ is hidden at any point in time from clouds, however autosynchronization is observed. Each figure shows drive (top) and response (bottom) pairs. $P(x,y,0)$ and $\hat{P}(x,y,0)$ in \ref{fig:Autosync_phyto_clouds_IC}, $P(x,y,200)$ and $\hat{P}(x,y,200)$ in \ref{fig:Autosync_phyto_clouds_1000}, and $P(x,y,8563)$ and $\hat{P}(x,y,8563)$ in \ref{fig:Autosync_phyto_clouds_FC}. $Z(x,y,0)$ and $\hat{Z}(x,y,0)$ in \ref{fig:Autosync_zoo_clouds_IC}, $Z(x,y,200)$ and $\hat{Z}(x,y,200)$ in \ref{fig:Autosync_zoo_clouds_1000}, and $Z(x,y,8563)$ and $\hat{Z}(x,y,8563)$ in \ref{fig:Autosync_zoo_clouds_FC}.}
\label{fig:Clouds_Auto_Species}
\end{figure}

\begin{figure}
\centering
\subfloat[][]{\includegraphics[width=.25\textwidth]{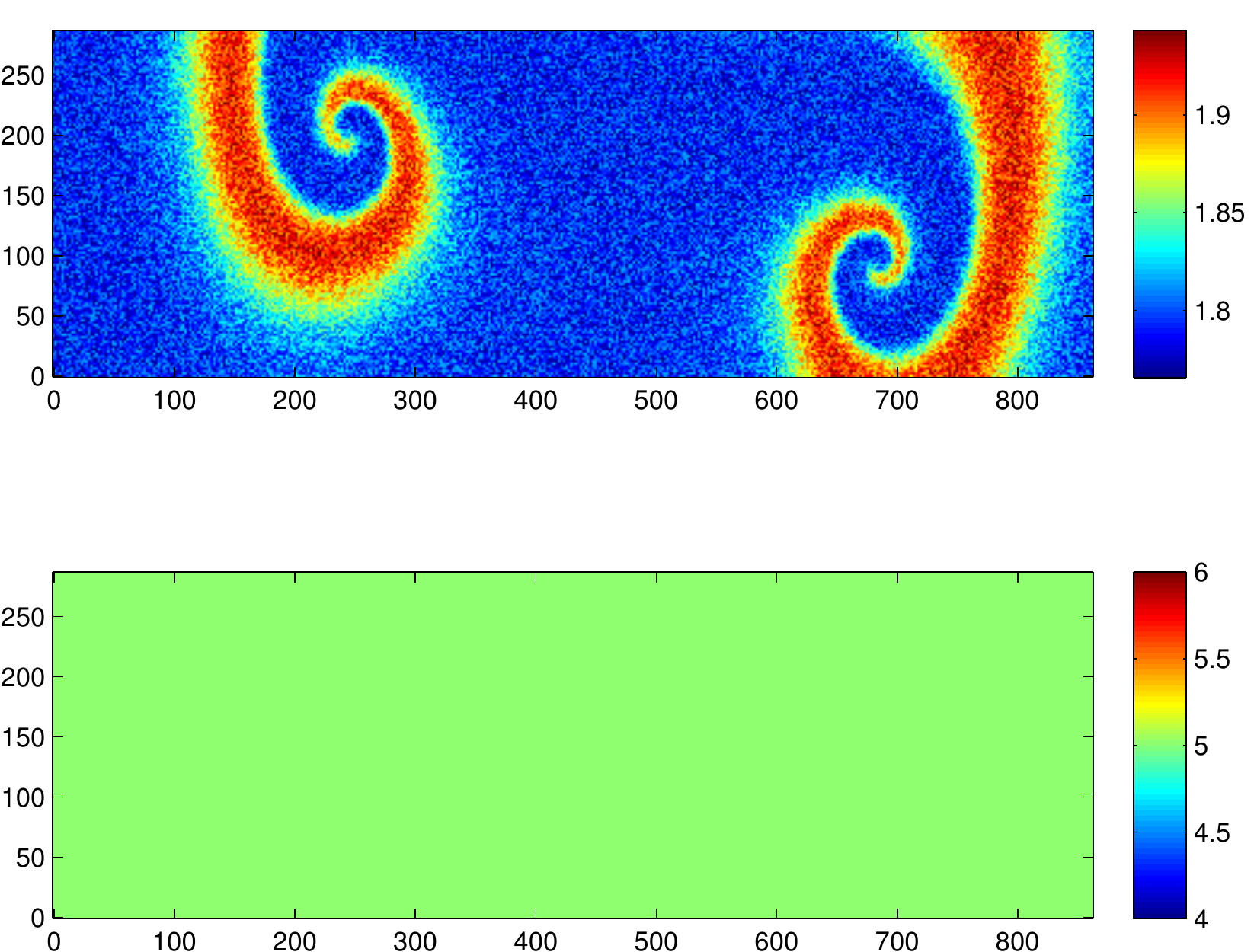}\label{fig:Autosync_k_clouds_IC}}
\subfloat[][]{\includegraphics[width=.25\textwidth]{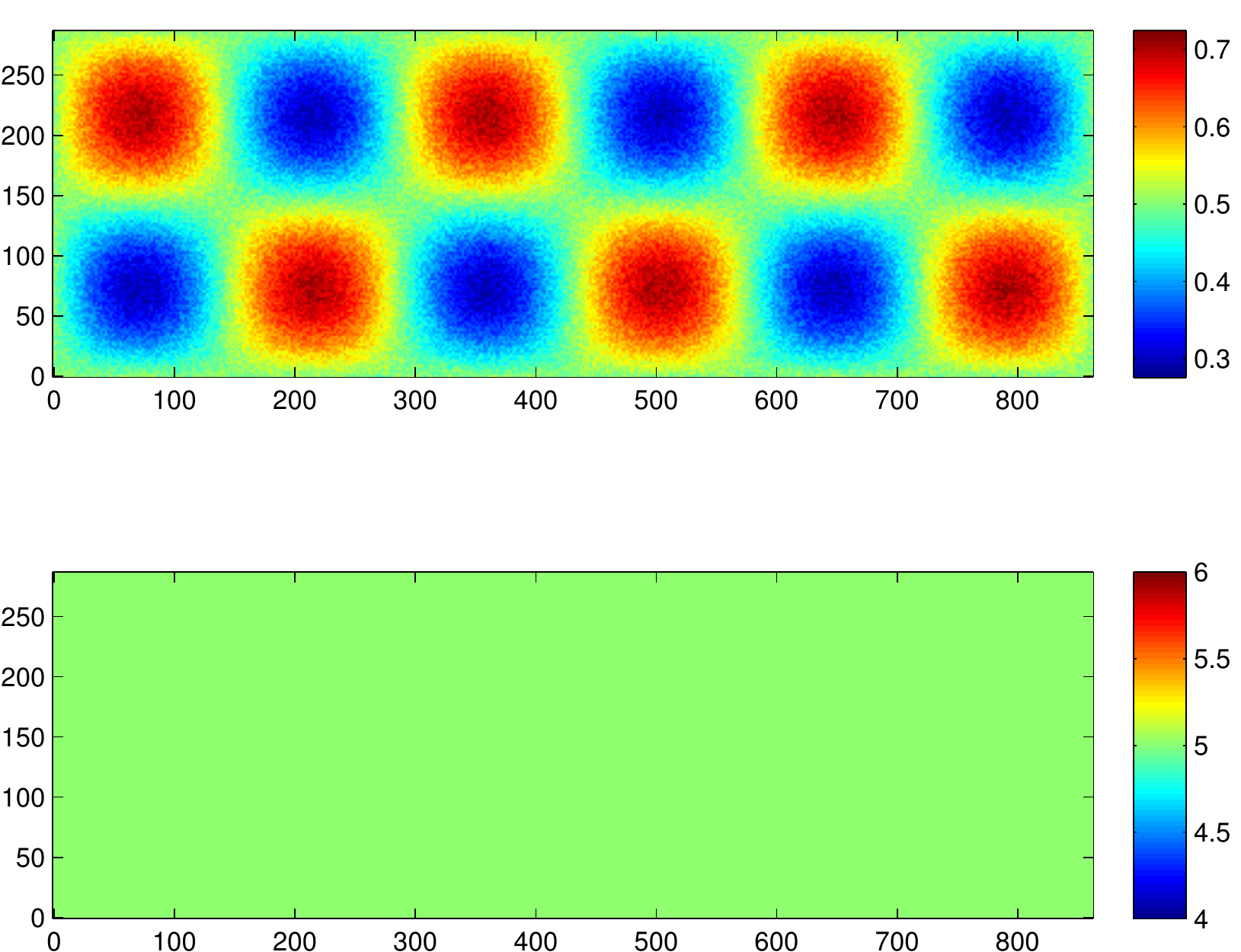}\label{fig:Autosync_m_clouds_IC}}\\[-.4cm]
\subfloat[][]{\includegraphics[width=.25\textwidth]{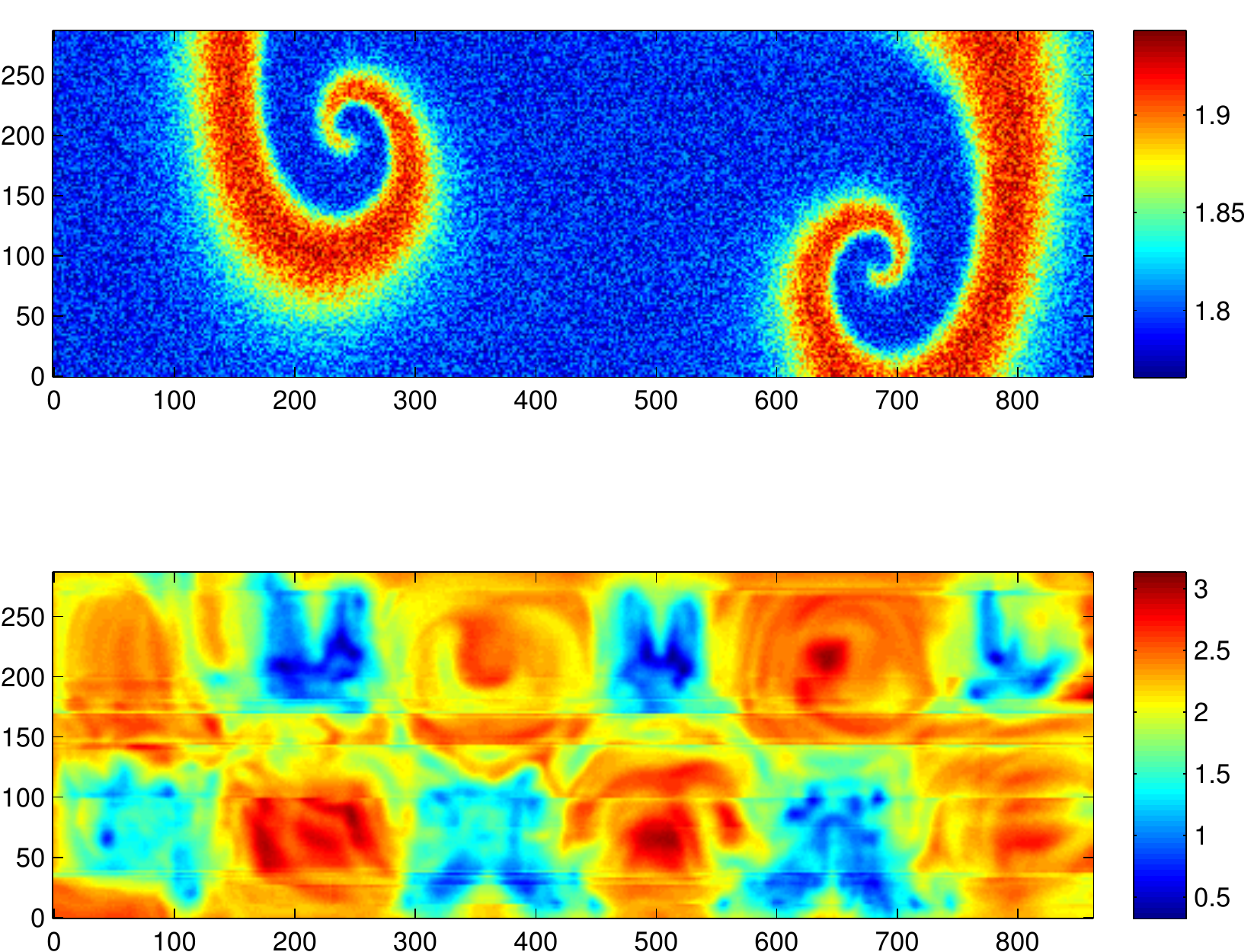}\label{fig:Autosync_k_clouds_1000}}
\subfloat[][]{\includegraphics[width=.25\textwidth]{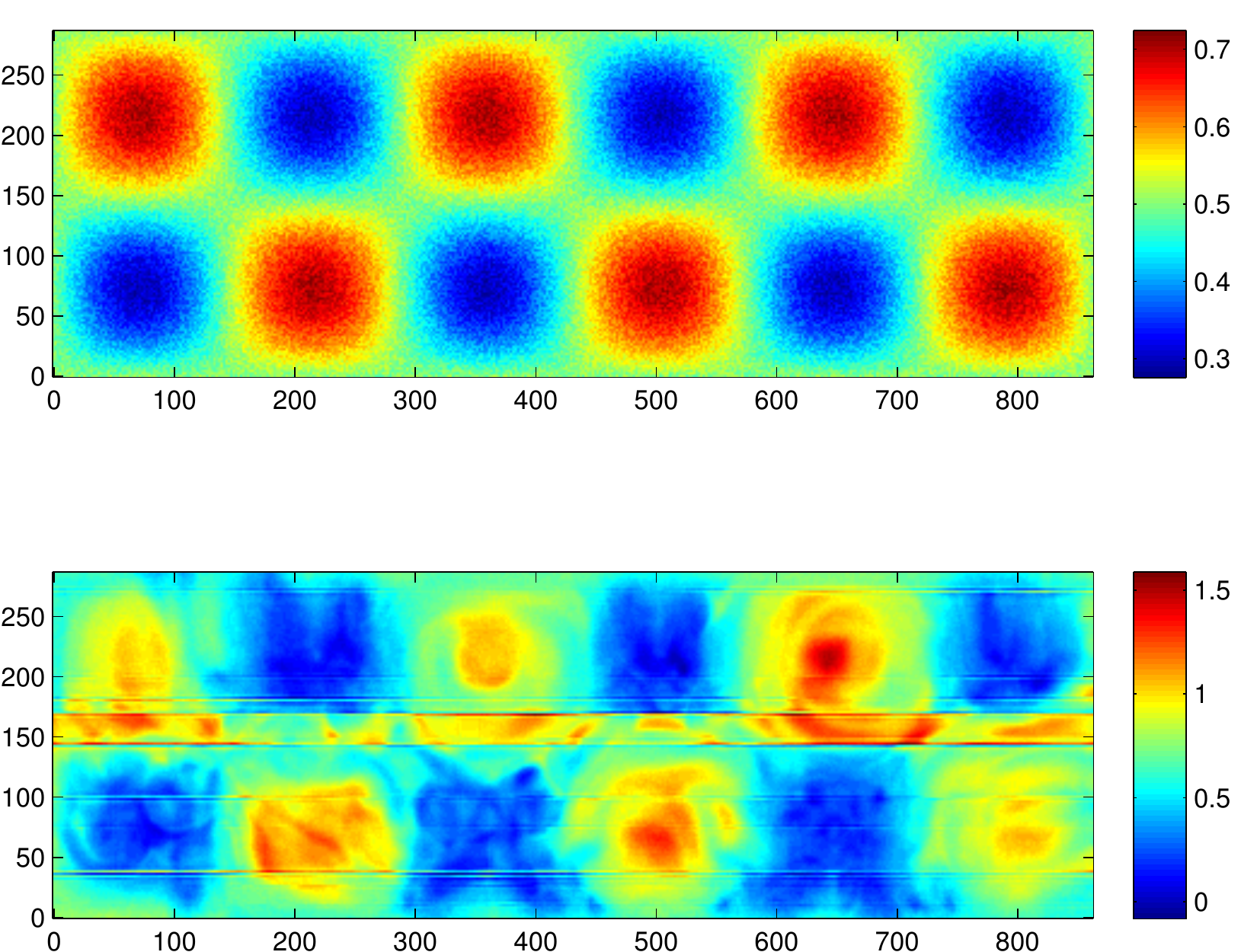}\label{fig:Autosync_m_clouds_1000}}\\[-.4cm]
\subfloat[][]{\includegraphics[width=.25\textwidth]{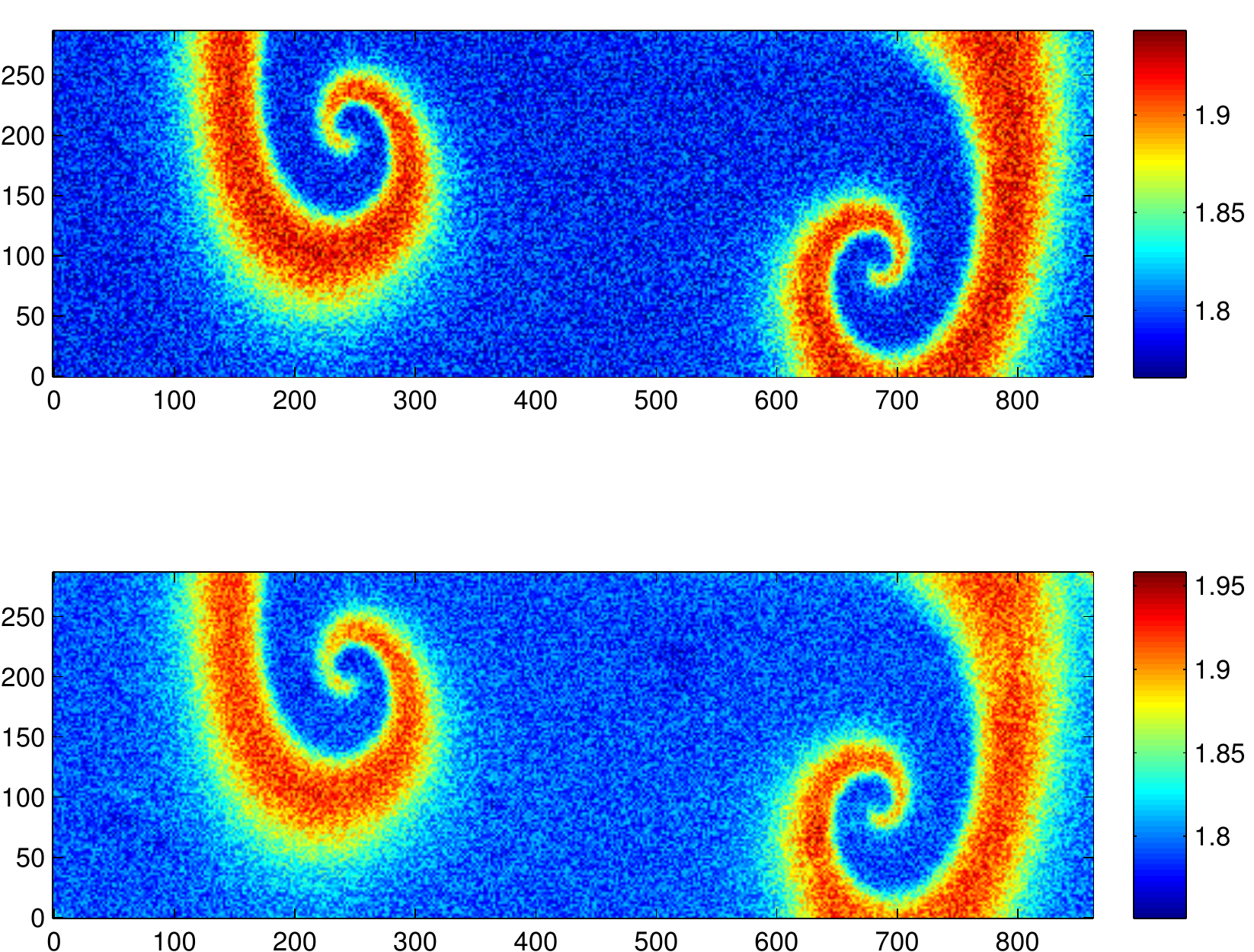}\label{fig:Autosync_k_clouds_FC}}
\subfloat[][]{\includegraphics[width=.25\textwidth]{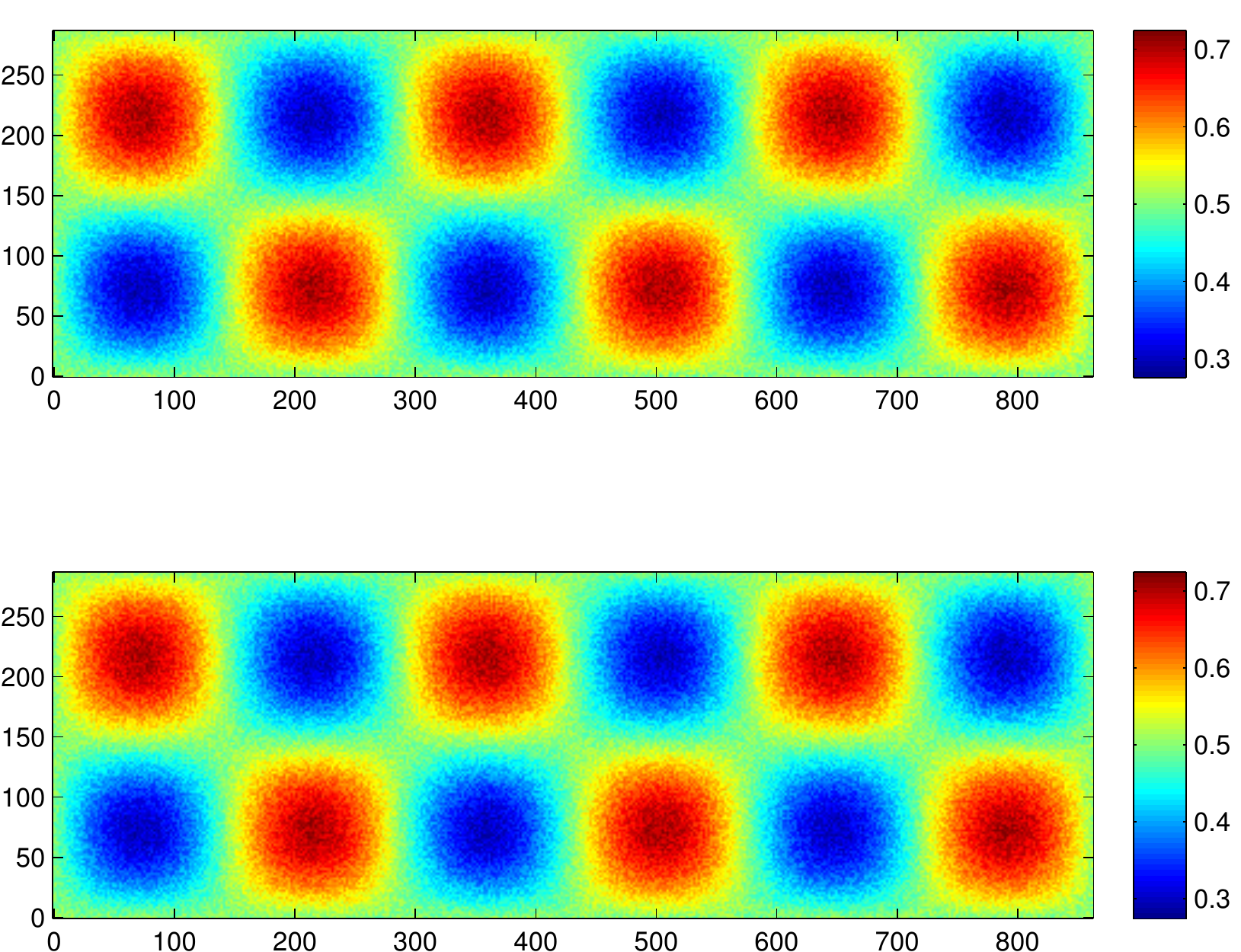}\label{fig:Autosync_m_clouds_FC}}
\caption{Autosynchronization of parameters with $25.5 \%$ of $\Omega$ hidden at any point in time from clouds and random noise added, however autosynchronization is observed. Each figure shows drive (top) and response (bottom) pairs. $k(x,y)$ and $\hat{k}(x,y,0)$ in \ref{fig:Autosync_k_clouds_IC}, $k(x,y)$ and $\hat{k}(x,y,200)$ in \ref{fig:Autosync_k_clouds_1000}, and $k(x,y)$ and $\hat{k}(x,y,8563)$ in \ref{fig:Autosync_k_clouds_FC}. $m(x,y)$ and $\hat{m}(x,y,0)$ in \ref{fig:Autosync_m_clouds_IC}, $m(x,y)$ and $\hat{m}(x,y,200)$ in \ref{fig:Autosync_m_clouds_1000}, and $m(x,y)$ and $\hat{m}(x,y,8563)$ in \ref{fig:Autosync_m_clouds_FC}.}
\label{fig:Clouds_Auto_Parameters}
\end{figure}
\begin{figure} \hspace*{-.1cm}
\subfloat[][]{\includegraphics[width=.25\textwidth]{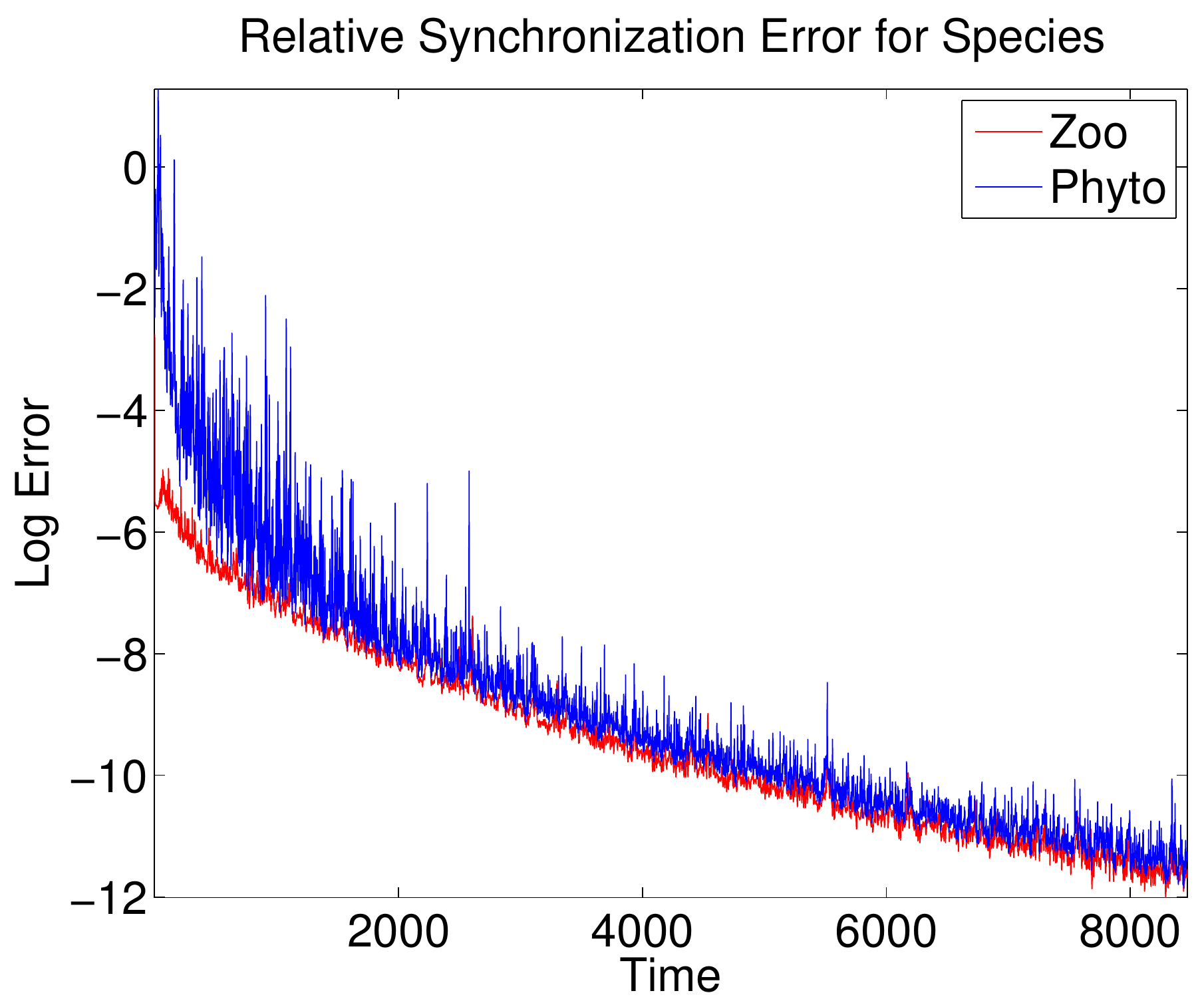}\label{fig:Autosync_PZ_Error_Clouds}} \hspace*{-.1cm}
\subfloat[][]{\includegraphics[width=.25\textwidth]{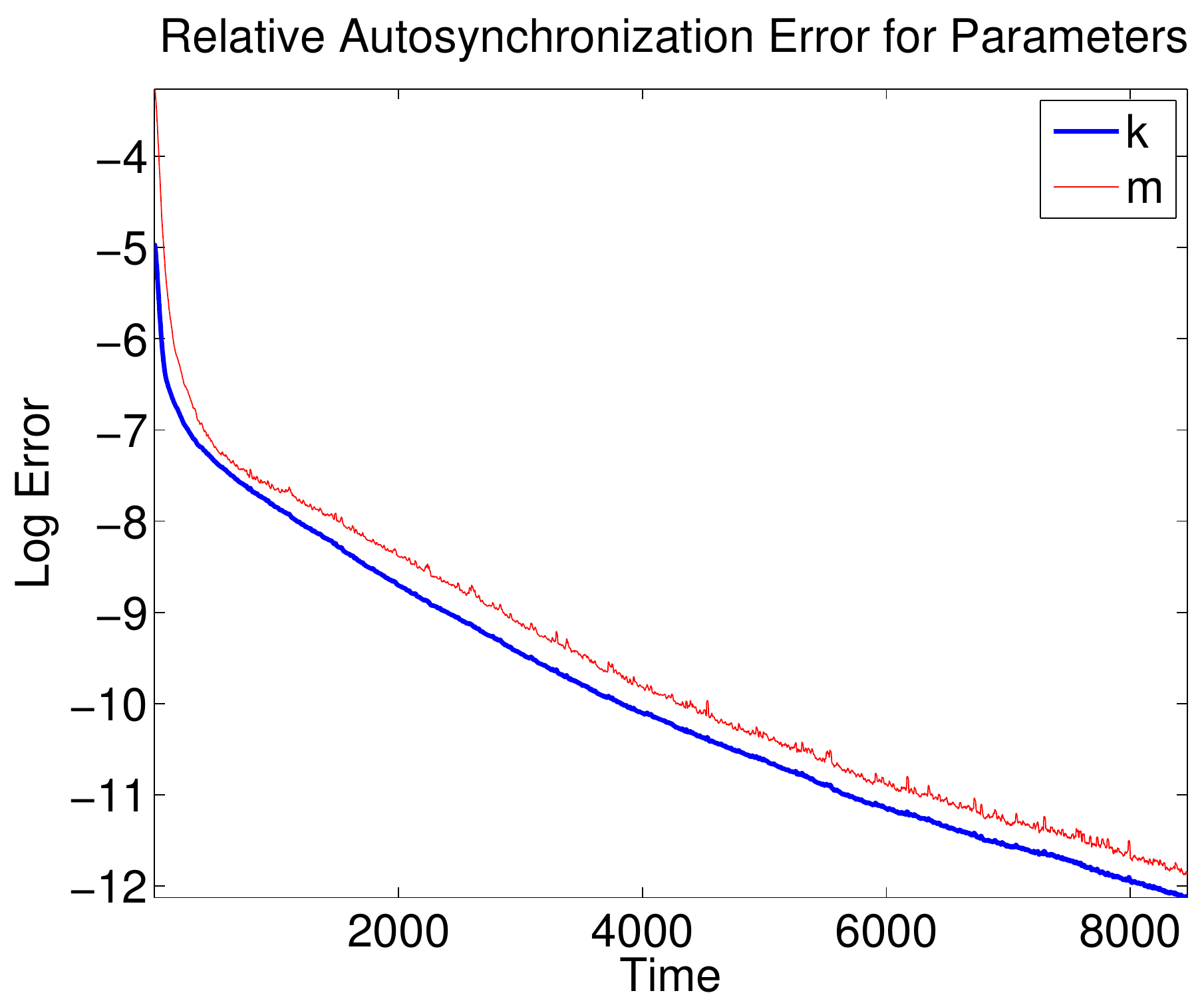}\label{fig:Autosync_km_Error_Clouds}}

\caption{Globally-averaged relative synchronization errors. Errors from simulation shown in (a) correspond to Figure \ref{fig:Clouds_Auto_Species} and (b) correspond to Figure \ref{fig:Clouds_Auto_Parameters}, shown to drop to within $1.2 \times 10^{-5}$ despite ever-present clouds.}
\label{fig:Clouds_Autosync_Error} 
\end{figure}

Figures \ref{fig:Clouds_Auto_Species} and \ref{fig:Clouds_Auto_Parameters} demonstrate a comparison between drive and response models. In the top of Figure \ref{fig:Autosync_phyto_clouds_IC}, we see the observed system $P(x,y,t)|_{\omega^C}$ wherein $25.5 \%$ of the data on $\Omega$ is not observable. Figures \ref{fig:Clouds_Auto_Species} and \ref{fig:Clouds_Auto_Parameters} demonstrate that phytoplankton, zooplankton, and both spatially dependent parameters $\hat{k}(x,y,t)$ and $\hat{m}(x,y,t)$ are estimated to high precision. 

Figure \ref{fig:Clouds_Autosync_Error} describes the globally-averaged relative error between the true system and the response system. The rate of convergence to the synchronization manifold is slower than with cloudless data as a result of allowing the systems to oscillate independently while not driven on $\omega$. For the simulation in Figures \ref{fig:Clouds_Auto_Species} and \ref{fig:Clouds_Auto_Parameters}, we choose $\kappa = 0.625$, $s_1 = 0.2$, and $s_2 = 0.6$ for good autosynchronization results. Summarizing, we have demonstrated that it is possible to fill in missing data when hidden by clouds and, as an added bonus, estimate noisy spatially-dependent model parameters with different functional forms. Similar results are obtained by testing other combinations of the two forms of model parameters in Figure \ref{fig:Parameters}.

Simulations are run for varying percentages of hidden data. Figure \ref{fig:errvshidden} shows the synchronization errors for simulations after a fixed time epoch of $t = 2400$ for all simulations. Specifically, the globally-averaged relative error between drive and response systems is plotted against the percentage of hidden data. It is clear that the speed of convergence of assimilation slows with respect to  the degree of occlusion. A counter-intuitive side note is that the assimilation quality actually improves by hiding data through about $13 \%$ before worsening as a larger percentage of data is hidden. For these simulations, the same initial conditions are used throughout for consistency. 

\begin{figure}[!h]
\hspace*{-.2cm}
\subfloat[][]{\includegraphics[width=.27 \textwidth]{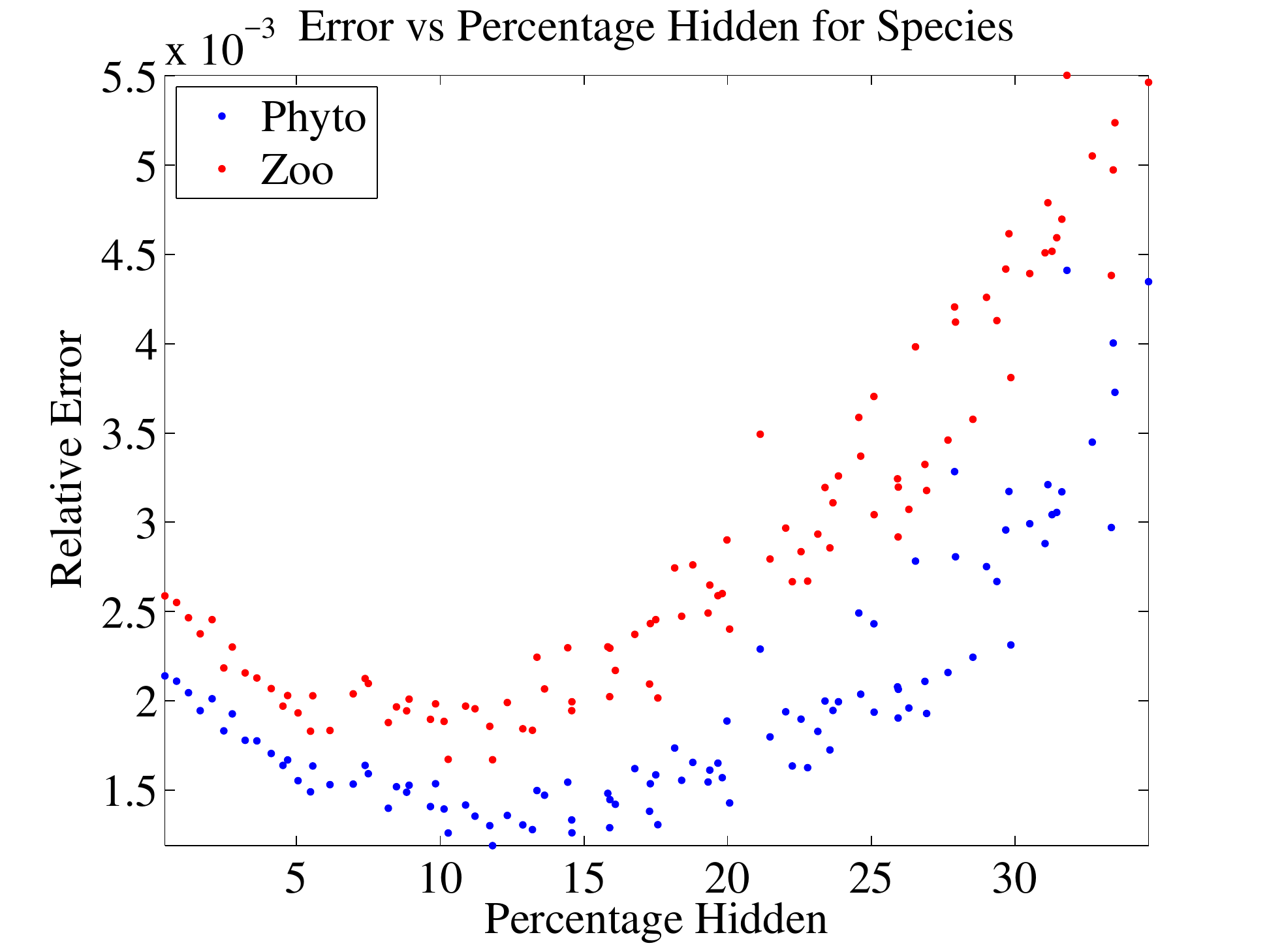}\label{fig:Hidden_vs_Error_Species}}  \hspace*{-.45cm}
\subfloat[][]{\includegraphics[width=.27 \textwidth]{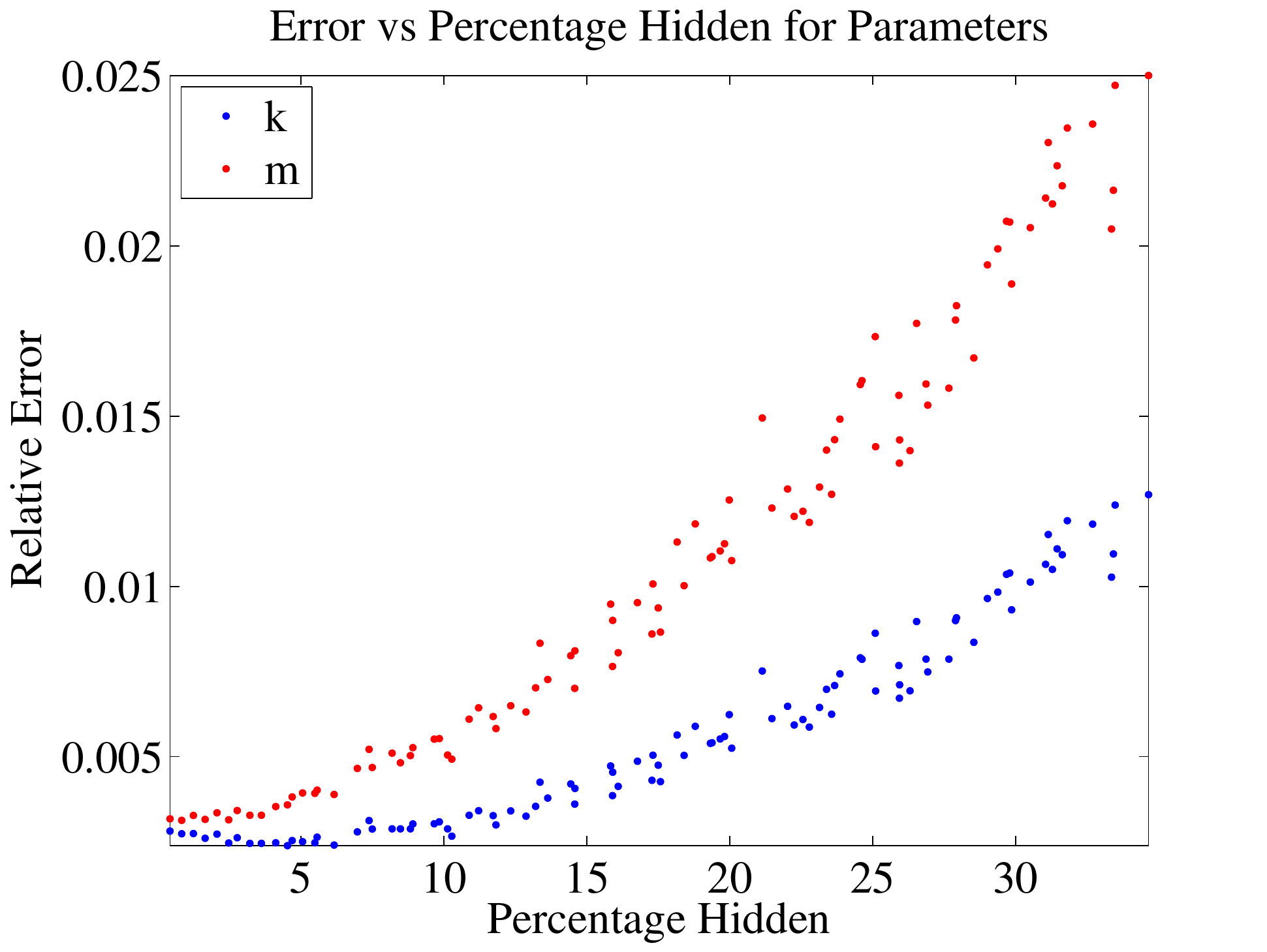}\label{fig:Hidden_vs_Error_Parameters}}
\caption{Synchronization error plotted against percentage of data hidden after simulation for $t = 2400$. Species shown in Figure \ref{fig:Hidden_vs_Error_Species} and parameters in Figure \ref{fig:Hidden_vs_Error_Parameters}.}
\label{fig:errvshidden}
\end{figure} 


We acknowledge inherent noise in remote sensing data and next demonstrate the method can work with fairly noisy observations. We add random noise to $P(x,y,t)$ for the length of the simulation, occlude $P(x,y,t)$ with $25.5 \%$ cloud coverage, and consider the same noisy, mixed functional form model parameters as above. As expected, as more noise is added to observations, the rate of synchronization slows and the error after a fixed epoch increases. Figure \ref{fig:errvsnoise} includes numerical experiments in which increasing noise is added to $P(x,y,t)$ and synchronization errors are compared over a fixed time epoch of $t = 2400$. 

\begin{figure}[!h]
\hspace*{-.2cm}
\subfloat[][]{\includegraphics[width=.27 \textwidth]{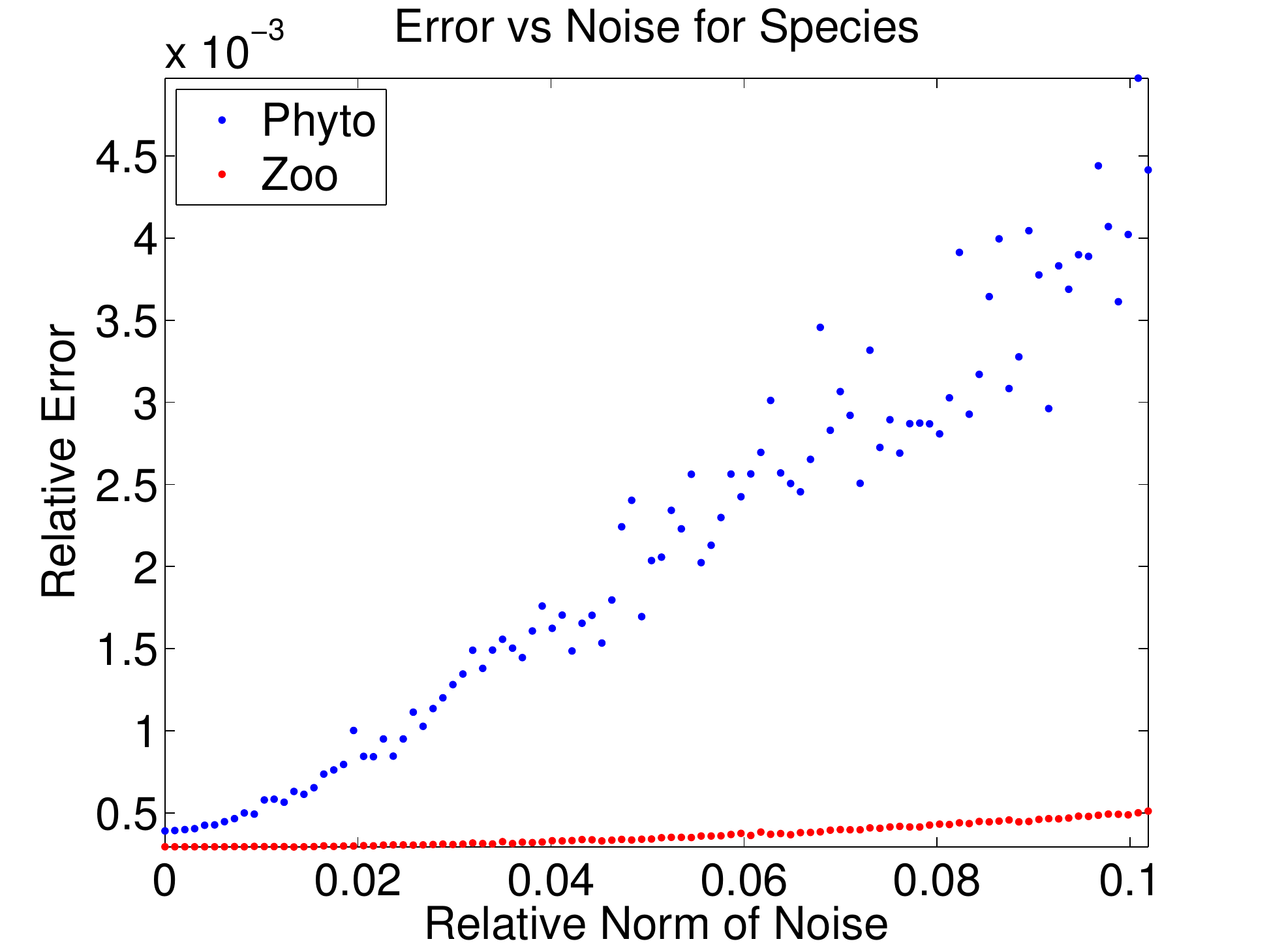}\label{fig:Noise_vs_Error_Species}}  \hspace*{-.45cm}
\subfloat[][]{\includegraphics[width=.27 \textwidth]{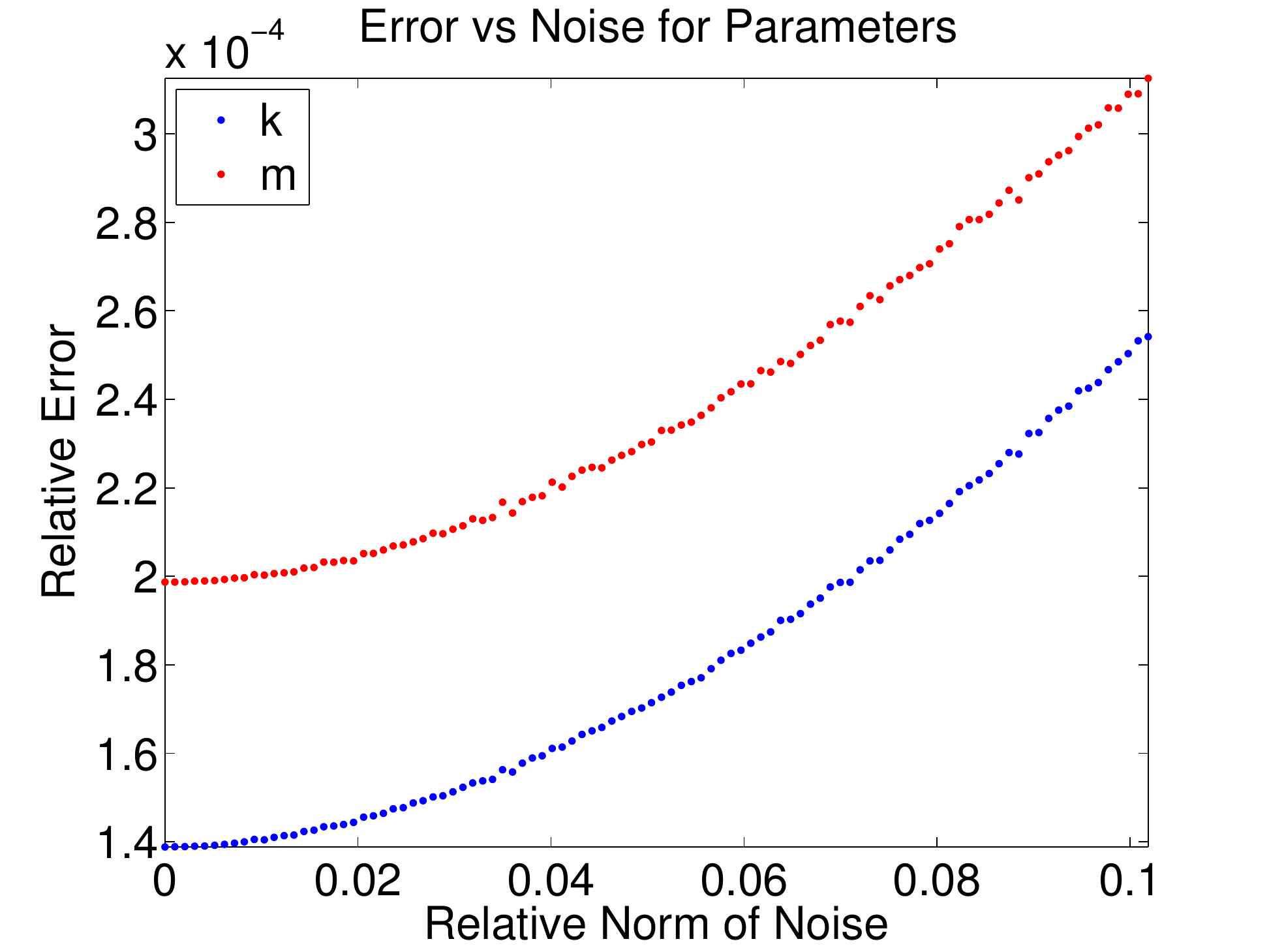}\label{fig:Noise_vs_Error_Parameters}}
\caption{Synchronization error plotted against amount of noise added to observations after simulation for $t = 2400$. Noise is normalized relative to amplitude of drive dynamics. Species shown in Figure \ref{fig:Noise_vs_Error_Species} and parameters in Figure \ref{fig:Noise_vs_Error_Parameters}.}
\label{fig:errvsnoise}
\end{figure}

To be considered practical, the method should provide decent results if data are available on a courser grid than that on which the model is evolved.

\section{Samples on a Coarse Grid}
Model simulations require data on an appropriately resolved grid, which may not line up with the grid on which samples are available. Often is the case that sampled data exist on a coarsened grid relative to a required simulation grid. We demonstrate that a simple modification of the technique will produce results similar to those above. We imagine that the domain is sampled in discrete patches, denoted by $S_n$, and on the patches we have available only local averages of true data as depicted in Figure \ref{fig:Coarser}. In this way, we sample a coarser subset of the domain and take local averages to be the new driving signal.

\begin{figure}[!h]
\hspace*{-.2cm}
\includegraphics[width=.5 \textwidth]{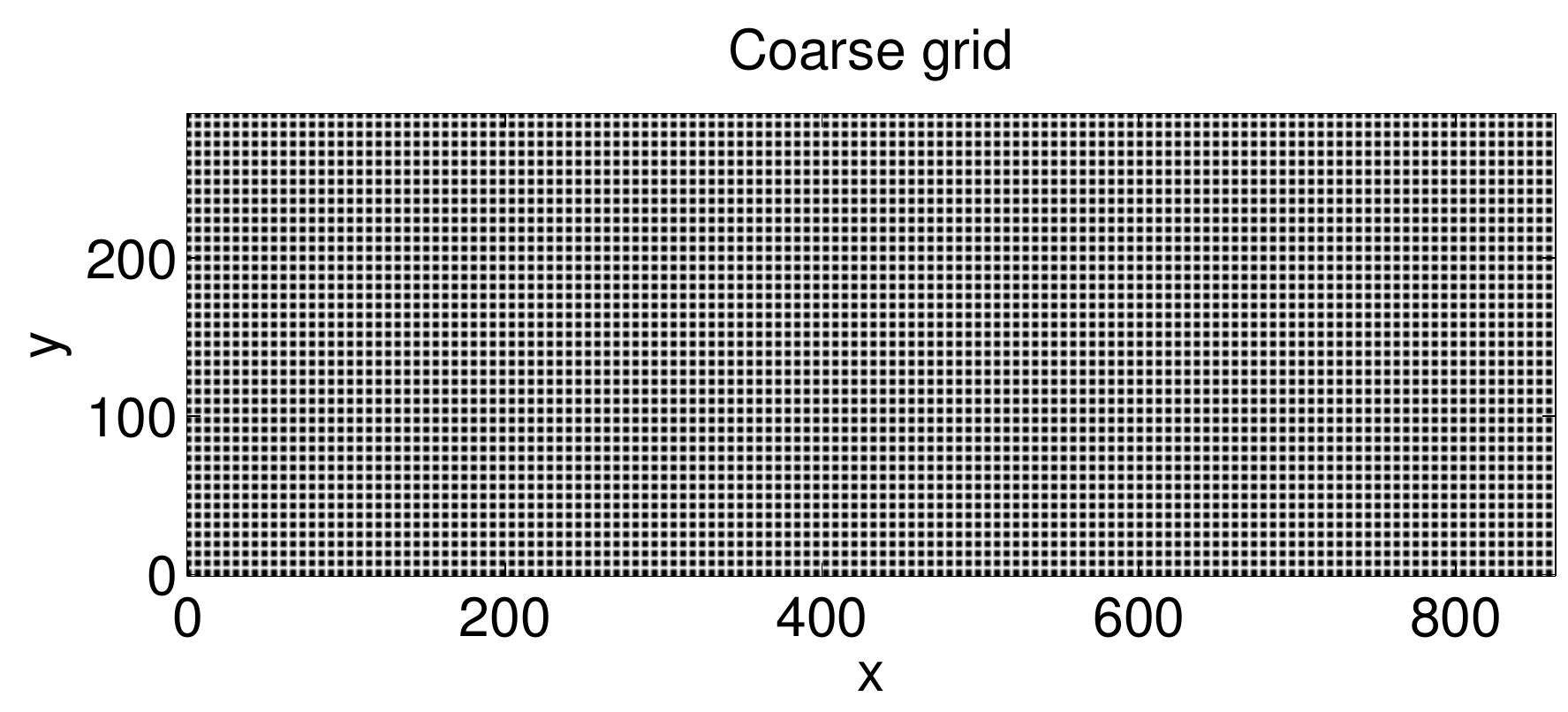}\vspace*{0cm}
\caption{Coarsely sampled domain with 2-pixel by 2-pixels sensors on which locally averaged data are sampled, and with 1 pixel between sensors wherein no data are available.}
\label{fig:Coarser}
\end{figure}

To adapt the problem to such data, we modify the response system, Eq \eqref{eq:Fish_Auto_Phyto_Clouds}

\begin{eqnarray}\label{eq:FishPhyto_Subsample}
\nonumber \frac{\partial \hat{P}}{\partial t} &=& \triangle \hat{P} + \hat{P}(1-\hat{P}) - \frac{\hat{P} \hat{Z}}{\hat{P} + h} + \kappa G_n \ \ \ \ \ \forall x,y \in S_n,\\
 \frac{\partial\hat{Z}}{\partial t} &=& \triangle \hat{Z} + \hat{k}\frac{\hat{P} \hat{Z}}{\tilde{P}+h} - \hat{m}\hat{Z},\\
\nonumber \frac{\partial \hat{k}}{\partial t} &=& \triangle \hat{k} + s_1(\tilde{P} - \hat{P}), \\
\nonumber \frac{\partial\hat{m}}{\partial t} &=& \triangle \hat{m} + s_2(\tilde{P} - \hat{P})\hat{P},
\end{eqnarray}

where $\tilde{P}$ represents locally averaged observations from the drive system and 

\begin{equation}
G_n(t) = \frac{1}{(dx)(dy)} \sum_{x,y \in S_n}(P(x,y,t) - \hat{P}(x,y,t)),
\end{equation}
where $S_n$ is the rectangular ``sensor'' on which local averaging occurs. 

Note, for best results, diffusion is added to the parameter equations in Eq \eqref{eq:FishPhyto_Subsample} in order that data from the driven regions, $S_n$ will diffuse into the occluded regions. To further mimic our target problem of remote sensing data, we add white Gaussian noise to the observations. We now demonstrate the method reconstructs parameters and unknown states on incomplete, noisy, patchy experimental data. 

Figure \ref{fig:NoiseSub} includes the results of simulations with coupling given by Eq \eqref{eq:FishPhyto_Subsample}, with a sensor size of 2 pixels by 2 pixels, and a spacing of 1 pixel between sensors. Here we show the end of the simulation for brevity. It is obvious the method suffers from noise, local averaging, and missing data between sensors. In Figure \ref{fig:Noise_Sub_P_FC}, it is clear a considerable amount of data is occluded, the data that are available include noise, and still unknown states and parameters are reconstructed fairly well.

\begin{figure}
\centering
\subfloat[][]{\includegraphics[width=.25\textwidth]{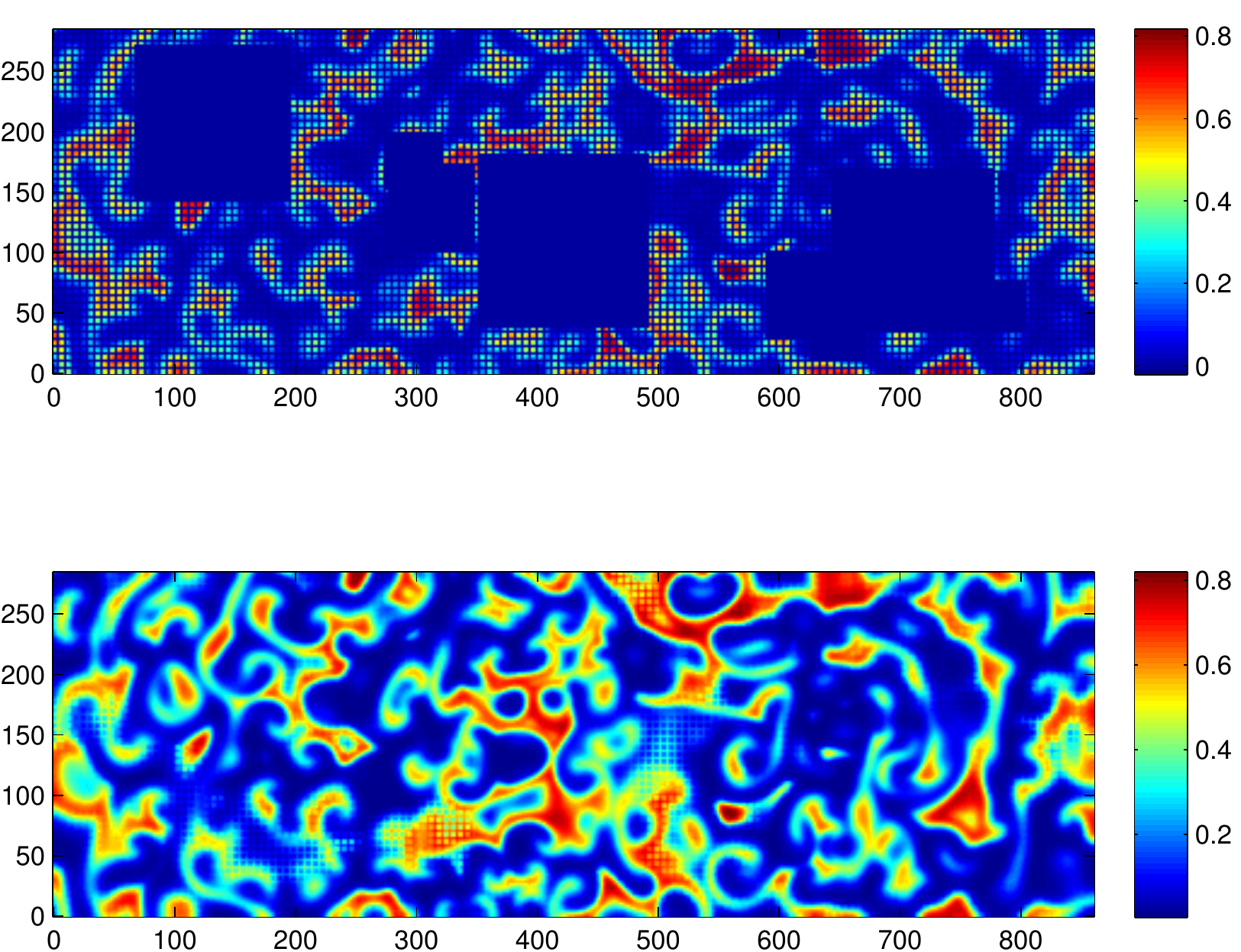}\label{fig:Noise_Sub_P_FC}}
\subfloat[][]{\includegraphics[width=.25\textwidth]{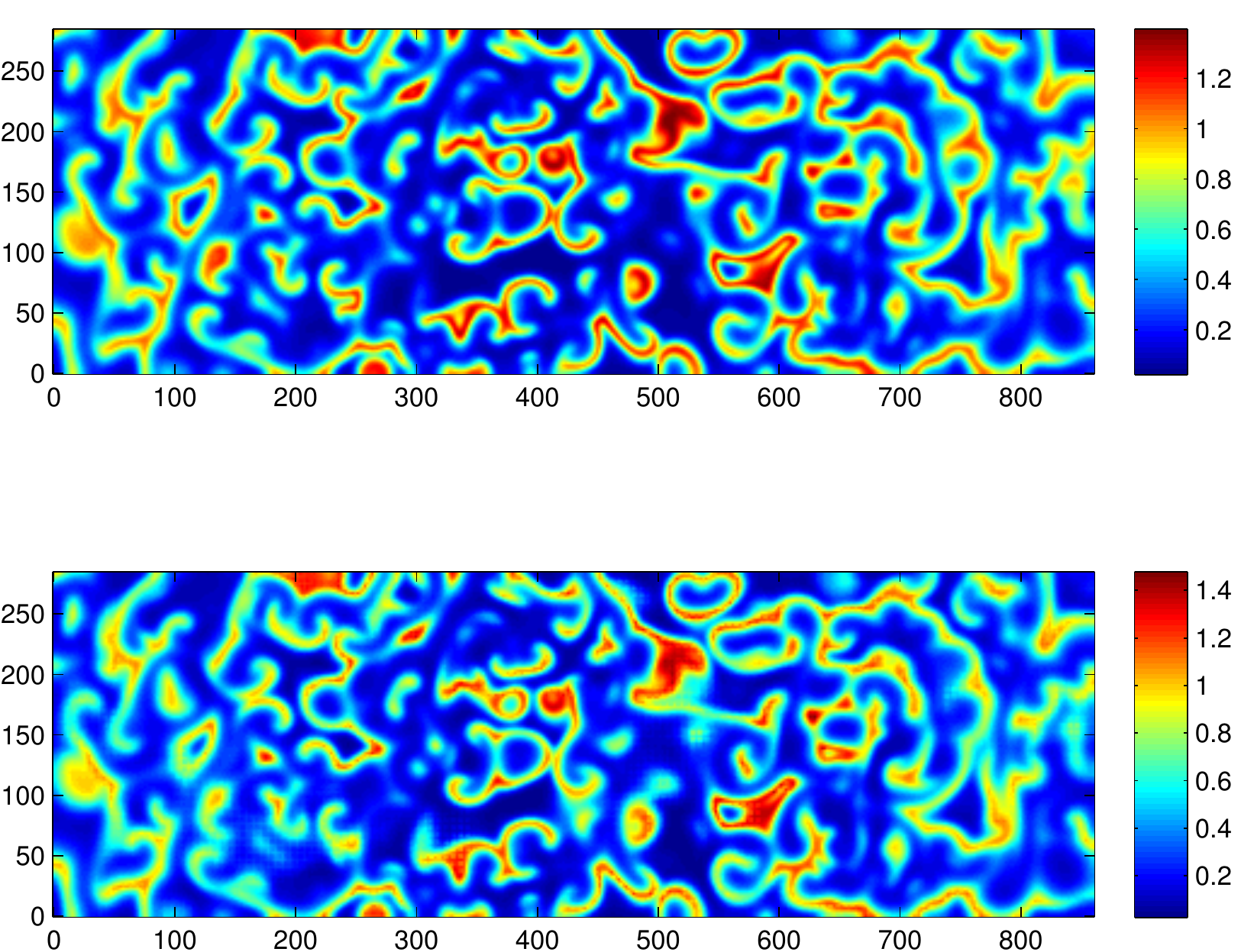}\label{fig:Noise_Sub_Z_FC}}\\[-.4cm]
\subfloat[][]{\includegraphics[width=.25\textwidth]{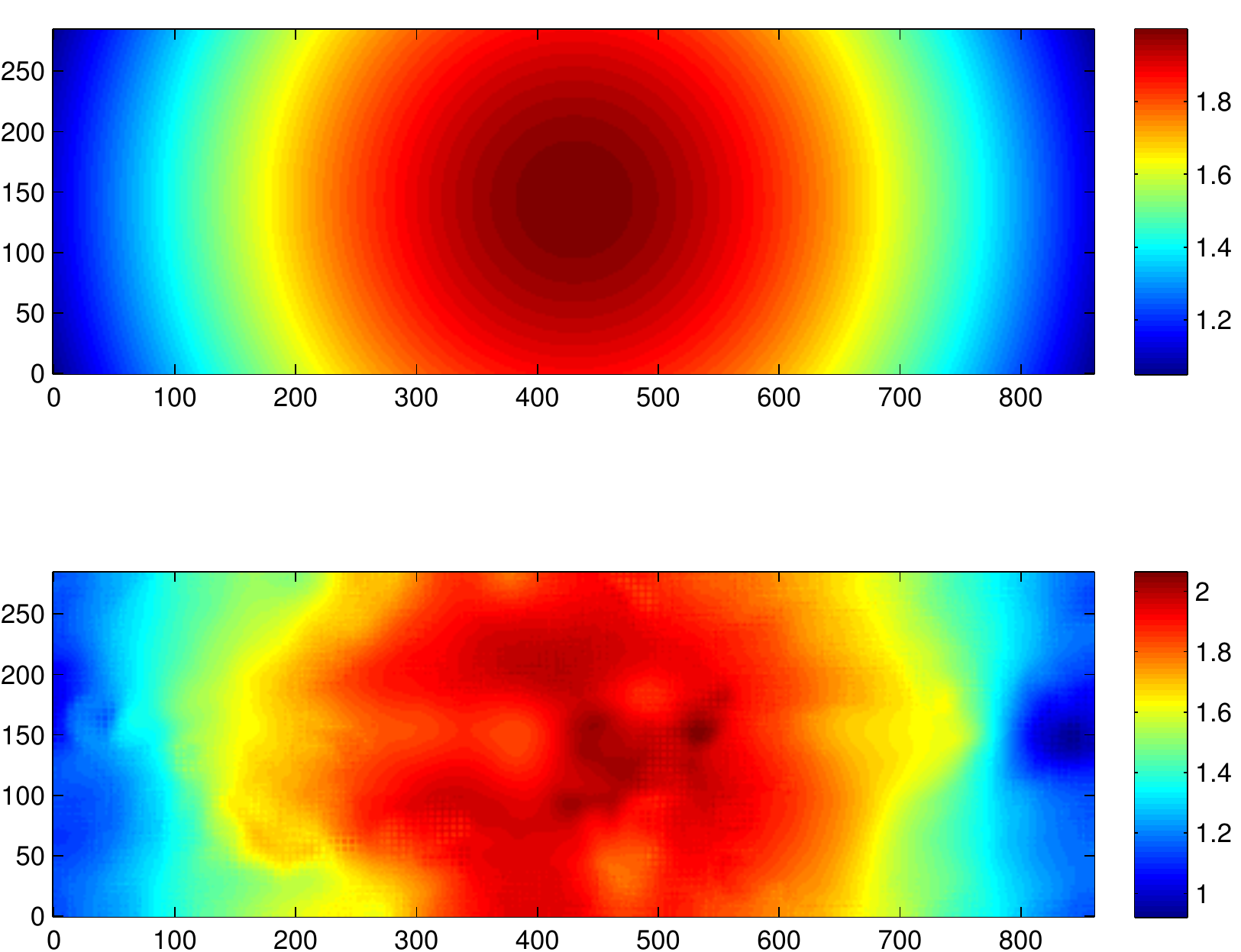}\label{fig:Noise_Sub_k_FC}}
\subfloat[][]{\includegraphics[width=.25\textwidth]{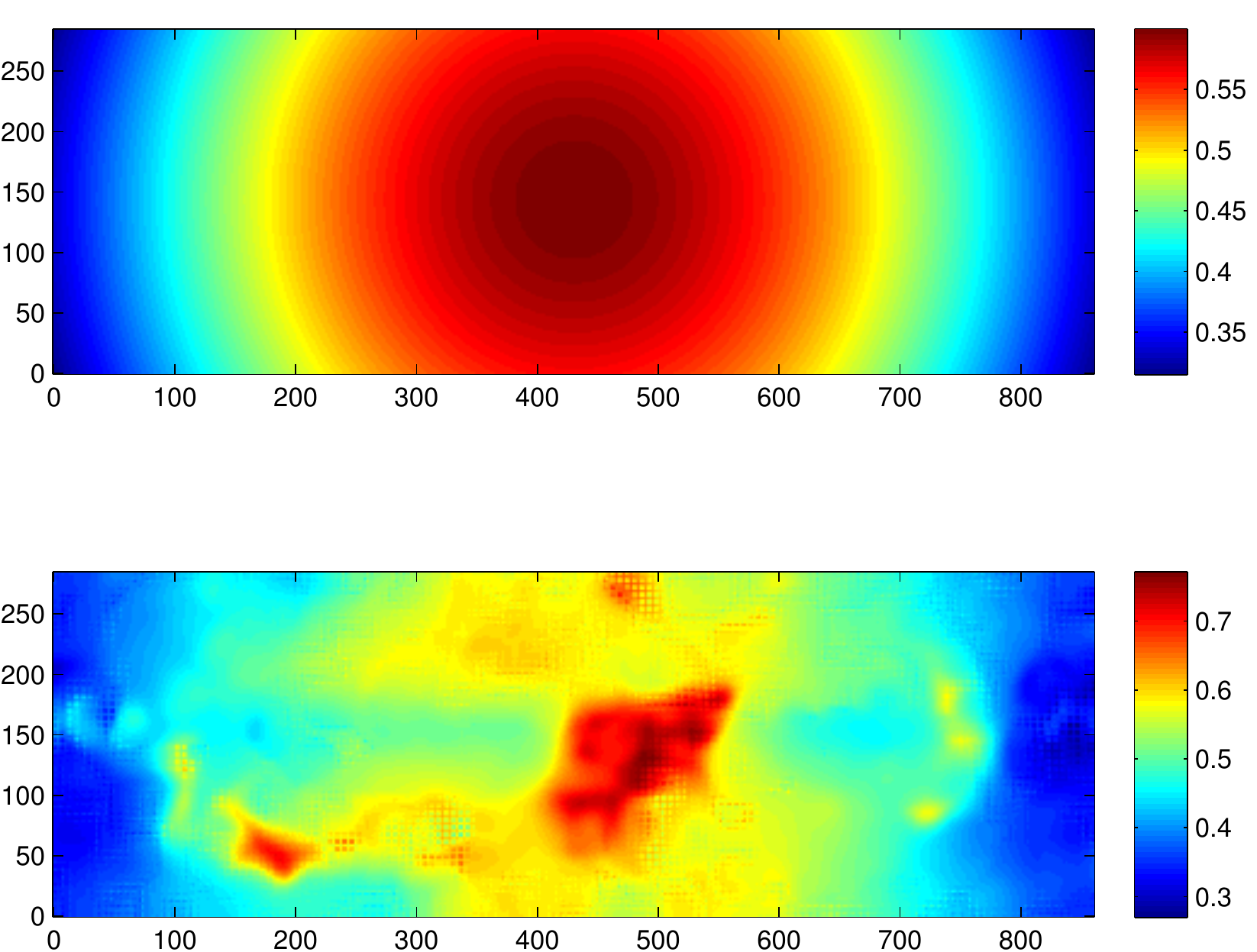}\label{fig:Noise_Sub_m_FC}}\\[-.4cm]
\caption{Autosynchronization of states and parameters with $25.5 \%$ of $\Omega$ hidden at any point in time from clouds, data available on a course grid, and random noise added. The messy available data is evident at the top of \ref{fig:Noise_Sub_P_FC}. Each figure shows drive (top) and response (bottom) pairs. $P(x,y,4000)$ and $\hat{P}(x,y,4000)$ in \ref{fig:Noise_Sub_P_FC}, $Z(x,y,4000)$ and $\hat{Z}(x,y,4000)$ in \ref{fig:Noise_Sub_Z_FC}, and $k(x,y)$ and $\hat{k}(x,y,4000)$ in \ref{fig:Noise_Sub_k_FC}. $m(x,y)$ and $\hat{m}(x,y,4000)$ in \ref{fig:Noise_Sub_m_FC}.}
\label{fig:NoiseSub}
\end{figure}

In Figure \ref{fig:Err_vs_Space} we provide simulations to demonstrate the dependency of synchronization quality on the sensor spacing. The synchronization errors for zooplankton and the parameter $k(x,y)$ are plotted against time for simulations admitting zero, one, and two pixels between sampling sensors on the domain. If sensor spacing is too sparse, the method struggles to fill in data between patches on which local averages are provided, and ultimately fails. We note that the parameter $m(x,y)$ and phytoplankton exhibit similar behavior. 

\begin{figure}[!h]
\hspace*{-.2cm}
\subfloat[][]{\includegraphics[width=.25 \textwidth]{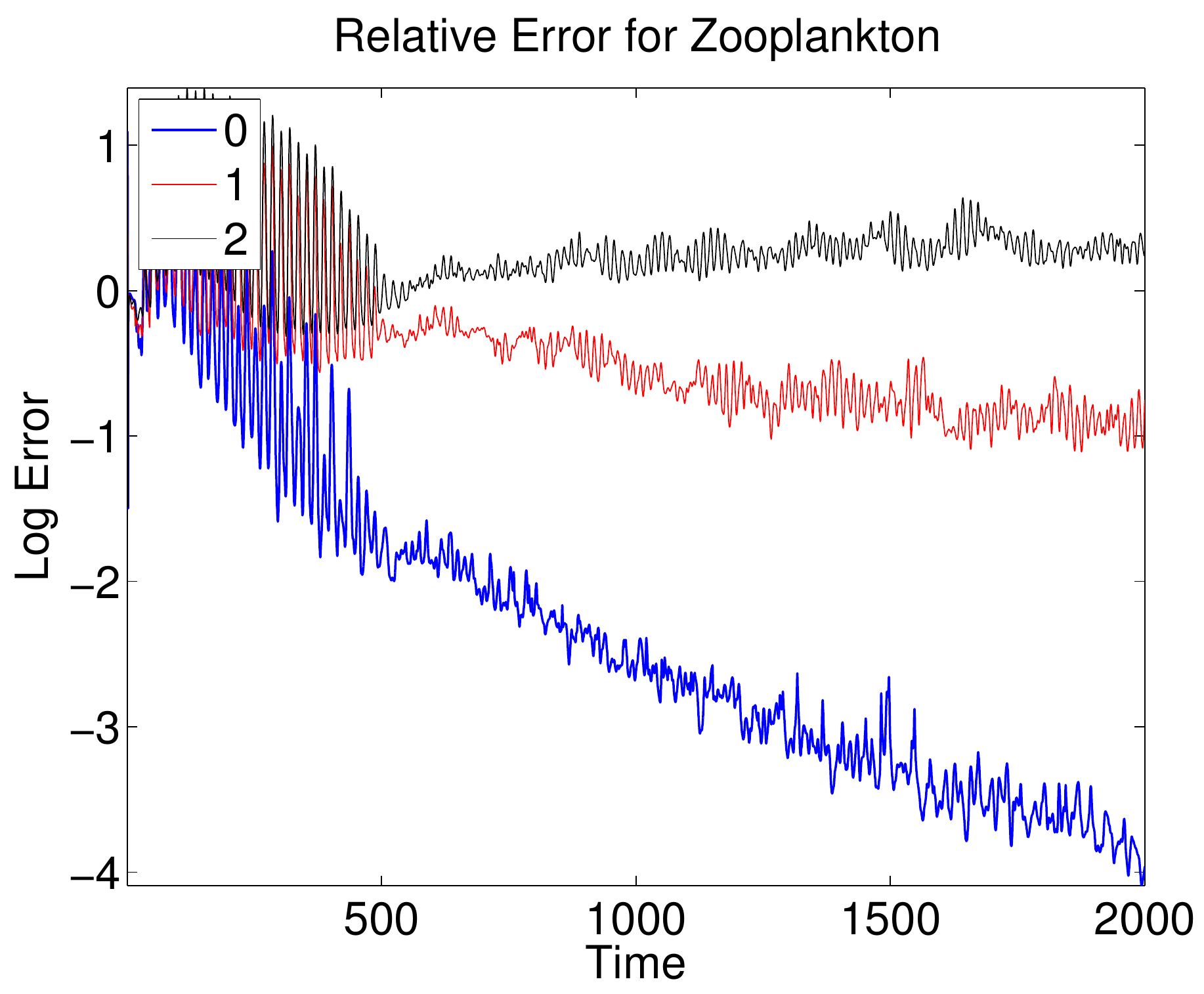}\label{fig:Compare_Spacing_Error_Species}}  \hspace*{-.1cm}
\subfloat[][]{\includegraphics[width=.25 \textwidth]{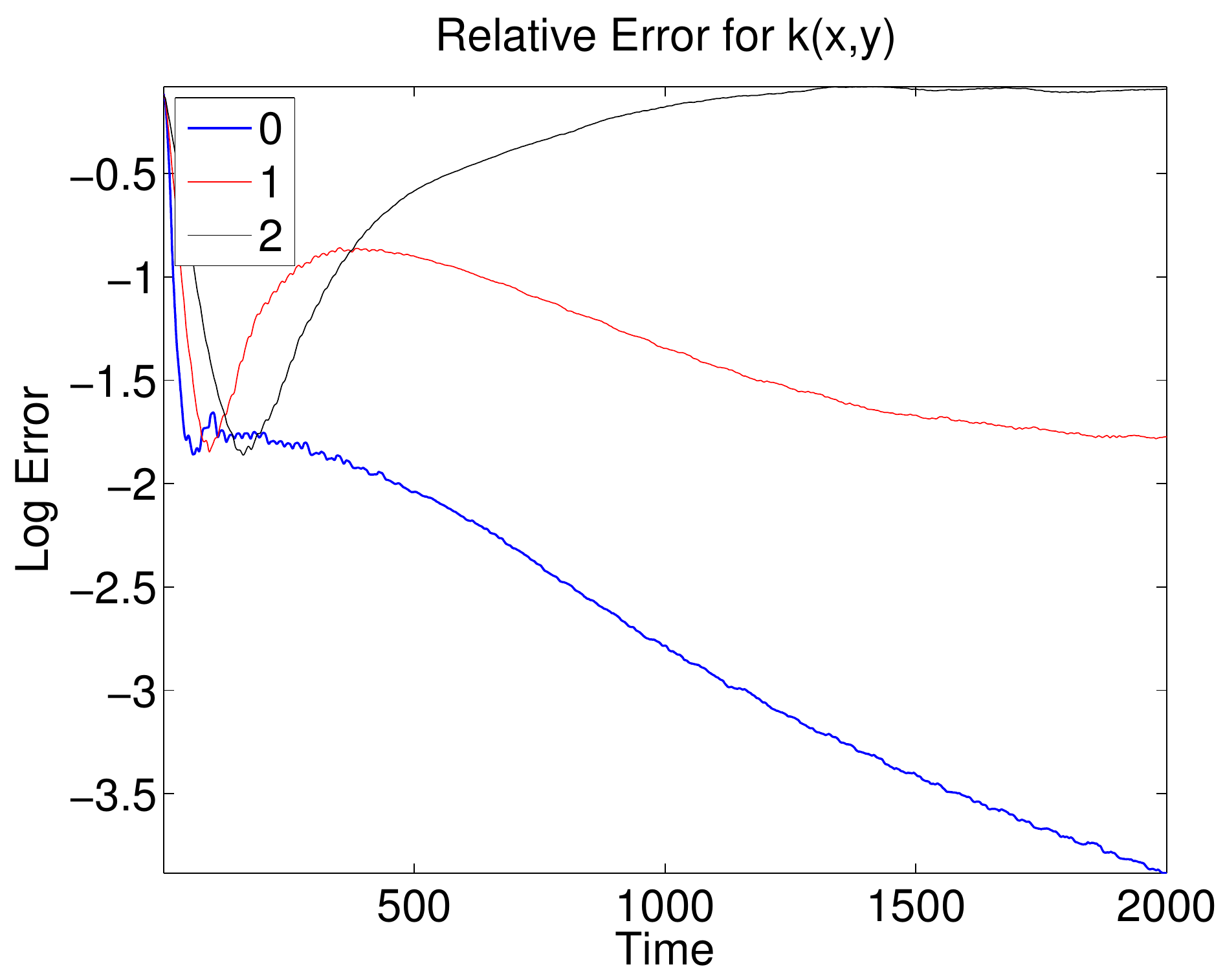}\label{fig:Compare_Spacing_Error_Parameters}}
\caption{Synchronization error over time for different amounts of spacing between locally averaged data. Species shown in Figure \ref{fig:Compare_Spacing_Error_Species} and parameters in Figure \ref{fig:Compare_Spacing_Error_Parameters}.}
\label{fig:Err_vs_Space}
\end{figure} 

These results demonstrate potential for future application to real data. In fact the autosynchronization method is capable of data assimilation by revealing hidden phytoplankton and zooplankton densities and estimating model parameters on noisy, coarse, cloudy data. Furthermore, our simulations indicate that state estimation by synchronization is more robust to coarse data than autosynchronization, which may be of use if parameter estimates are not required. This is merely a demonstration of autosynchronization applied to a remote sensing problem. These results require analytical reinforcement, including a discussion of the basin of attraction for the synchronization manifold and allowable coupling strengths to observe synchronization and parameter estimation. In the following, we provide analysis of manifold stability.

\section{Analysis}

To better understand this tendency to synchronize despite hidden data, we are inspired by a method from network theory. We represent the system as a moving neighborhood network and define each pixel in the image domain $\Omega$, to be an individual oscillator, $\mathbf{u_j}$. We include the drive and response images in the network so that an $m \times n$ image provides $2N = 2mn$ oscillators.  

Therefore, each drive oscillator $\mathbf{u_j}$, for some $j = 1:N$, feeds a response oscillator corresponding to the same spatial pixel, $\mathbf{\hat{u}_{j}}$, for some $j=N+1:2N$. The drive system is hidden for a time epoch $\Delta$ to represent intermittent cloud cover. Pixels over which there are clouds are uncoupled while covered. In time the network topology shifts thereby shifting the coupling between drive and response oscillators over $\Omega$. 

The adjacency matrix of our directed weighted random graph has zeros on the main diagonal and is defined as: 
 \begin{align}
\nonumber a_{ij} = \begin{cases}
\ \ w_{ij}  \mathrm{\  with \ probability \ } p_{ij}\\
\ \ 0 \mathrm{ \ with \ probability \ }1 - p_{ij},
\end{cases}
\end{align}
 for $i \neq j$. We subtract the adjacency matrix from the matrix $D$, with nonzero elements on the main diagonal $d_i = \sum_{j = 1}^n a_{ij}$, to obtain the graph Laplacian $L$. Our network requires edges between nearby neighbors for diffusion, and between images for both direct replacement and diffusive drive-response coupling. We choose to order the 2$N$ vertices representing image pixels from left to right, taking a row at a time, and placing the drive image first, followed by the response image. That is we stack subsequent rows of the drive image followed by the response image to build the vector of 2$N$ components. 
 
Here we analyze the occluded synchronization system \eqref{eq:Fish_Sync_Phyto_clouds}. We define two Laplacians to represent diffusion and drive-response coupling. \eq{L_1 \in \mathbb{R}^{2N \times 2N}} is a sparse matrix with weighted entries corresponding to a chosen diffusion stencil and boundary conditions. \eq{L_2(t) \in \mathbb{R}^{2N \times 2N}}, with elements $l^2_{ij}(t)$ is a time-dependent sparse matrix that represents diffusive coupling with a switching network topology and is fixed in the time interval $T_k = [t_k,t_{k+1}) $
 
 \begin{align}
\nonumber  L_2(t) = 
\left[
\begin{array}{cc}
 \textbf{0} & \textbf{0}  \\
  I_{N} &  -I_{N}
\end{array}
\right] , \end{align} 
 
\noindent where \eq{\textbf{0}} is an $N \times N$ zero matrix. Next, we define two matrices to model coupling between species, $B_1 = I_{2  \times 2}$ and $B_2 =  [1 \ 0; 0 \ 0]$. The drive dynamics for the first $N$ oscillators and the response dynamics for the remaining $N$ oscillators are
\begin{eqnarray}\label{eq:stochastic}
 \mathbf{\dot{u}}_q (t) &=& \mathbf{f}(\mathbf{u}_{q}) + \sigma_1 B_1\sum^{2N}_{j=1}l^1_{qj}(t)\mathbf{u}_j,  \ \ \ \ \ \ q = 1:N, \\ 
\nonumber \mathbf{\dot{u}}_q (t) &=& \mathbf{f}(\mathbf{u}_{q}) + \sigma_1 B_1\sum^{2N}_{j=1}l^1_{qj}(t)\mathbf{u}_j + \sigma_2 B_2\sum^{N}_{j=1}l^2_{qj}(t)\mathbf{u}_j, \\  \nonumber && \hspace*{4.6cm} q = N+1:2N, 
\end{eqnarray}
 respectively, where $\mathbf{u}_q \in {\mathbb{R}}^2$ is the state of the $q$th oscillator, \eq{\sigma_1 = \frac{1}{h^2}}, \eq{\sigma_2 = \kappa}, and $\mathbf{f} : \mathbb{R}^2 \to \mathbb{R}^2 $ describes the individual dynamics of each oscillator.
 
 We linearize the system about the synchronization manifold as
\begin{align}\label{eq:linearized}
\dot{\delta u}(t) = (F + \sigma_1 L^1 \otimes B_1 + \sigma_2 L^2(t) \otimes B_2) \delta u(t),
\end{align}

where 
 \begin{align}
\nonumber  F &= I_{2N} \otimes J_i, \ \ \ \textrm{and} \end{align} 
 \begin{align}
\nonumber  J_i &= 
\left[
\begin{array}{cc}
\left(1 - 2P_i - \frac{h}{(P_i + h)^2}\right)  & -\frac{P_i}{P_i + h} \\
  \frac{h k}{(P_i + h)^2} &  \frac{k P_i}{P_i + h} - m
\end{array}
\right] . \end{align} 

We decompose \eqref{eq:linearized} into a component that evolves along the synchronization manifold and a transverse component with a matrix \eq{W \in \mathbb{R}^{2N \times (2N-1)}}. The state vector \eq{\delta u (t)} is decomposed, with $e$ denoting the standard basis vector in \eq{\mathbb{R}^2}, as
\begin{align}
\nonumber \delta u (t) = (W \otimes I_2) \zeta(t) + e \otimes \delta u_s (t),
\end{align}
\noindent where
\begin{align}
\nonumber \zeta(t) = (W \otimes I_2)^T \delta u(t), 
\end{align}
\noindent and
\begin{align}
\nonumber \delta u_s(t) = \frac{1}{N} ((e \otimes I_2)^T \delta u (t)).
\end{align}

The linearized dynamics \eqref{eq:linearized} are partitioned as 
\begin{eqnarray}
\nonumber \dot{\delta u_s}(t) &=& F(t) \delta u_s(t) + \sigma_1(e^T L^1 W \otimes B_1)\zeta(t) \\ \nonumber && + \ \sigma_2(e^T L^2(t) W \otimes B_2)  \zeta(t), \ \ \ \ \textrm{and}\\[.2cm]
\nonumber \dot{\zeta}(t) &=& I_{2N-1} \otimes F(t) + \sigma_1 (W^T L^1 W \otimes B_1)\zeta(t) \\ \nonumber && + \ \sigma_2 (W^T L^2(t) W \otimes B_2) \zeta(t),
\end{eqnarray}

where almost sure asymptotic synchronization of \eqref{eq:stochastic} is observed if \eq{\zeta(t)} almost surely converges to zero \cite{kushner1967stochastic}. 
 
It has been proven \cite{porfiri2008synchronization} that if the un-occluded system is uniformly asymptotically stable, then so is the occluded system, provided that the system is observable often enough. For completeness we restate Theorem 1 found in \cite{porfiri2008synchronization}:

\begin{thm} \label{thm1} Consider the deterministic dynamic system:
\begin{align}\label{eq:network}
\dot{y}(t) = (I_{N-1} \otimes F(t) + \sigma W^TE[L]W \otimes B(t))y(t),
\end{align}
representing the linearized transverse dynamics of 
\begin{align}\label{eq:networktwo}
\dot{x}_q(t) = f(x_q(t)) + \sigma B(t) \sum^N_{j=1} E[l_{qj}(t)]x_j(t),\\
\nonumber q= 1,...,N, t \in \mathbb{R}^+
\end{align}
where $E[L]$ is the time-averaged graph Laplacian.  Assume that F(t) and B(t) are bounded and continuous for all $t \geq 0$. If \eqref{eq:network} is uniformly asymptotically stable, there is a time-scale $\Delta^* > 0$ such that for any shorter time-scale $\Delta < \Delta^*$, the stochastic system \eqref{eq:stochastic} locally asymptotically synchronizes almost surely. 
\end{thm}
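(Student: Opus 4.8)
\section*{Proof strategy}

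The plan is to treat the switched stochastic system \eqref{eq:networktwo} as a fast-switching perturbation of the averaged deterministic system \eqref{eq:network}, and to transfer the uniform asymptotic stability of the latter into almost-sure local synchronization of the former by a Lyapunov comparison carried out over successive switching epochs $T_k = [t_k,t_{k+1})$ of length $\Delta$. First, because \eqref{eq:network} is uniformly asymptotically stable and $F(t)$, $B(t)$ are bounded and continuous, a converse Lyapunov theorem for linear time-varying systems supplies a $C^1$ function $V(t,y)$ with $c_1\|y\|^2 \le V(t,y)\le c_2\|y\|^2$, $\|\partial V/\partial y\|\le c_3\|y\|$, and $\dot V \le -c_4\|y\|^2$ along trajectories of the averaged dynamics; equivalently, its transition matrix obeys a uniform bound $\|\Phi_{E[L]}(t,s)\|\le K e^{-\lambda(t-s)}$ with $\lambda>0$.

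Next, fix $\Delta$ and, on each $T_k$, compare the (random) transition matrix $\Phi_k$ of the realized switched system with $\Phi_{E[L]}(t_{k+1},t_k)$. The two generators $I\otimes F(t) + \sigma\,W^T L(t) W\otimes B(t)$ and $I\otimes F(t) + \sigma\,W^T E[L] W\otimes B(t)$ differ by $\sigma\, W^T\bigl(L(t)-E[L]\bigr)W\otimes B(t)$, which is bounded uniformly in $t$ since $L(t)$ ranges over finitely many matrices of bounded norm and $B(t)$ is bounded. A Duhamel/Gr\"onwall estimate then yields $\|\Phi_k - \Phi_{E[L]}(t_{k+1},t_k)\| = O(\Delta)$ deterministically; integrating the fluctuation $L(t)-E[L]$ against the slowly varying averaged flow improves this to an $O(\Delta^2)$ bias term plus a mean-zero term. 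Feeding this into $V$ gives a recursion of the form $M_{k+1} \le (\rho(\Delta) + C\Delta)\, M_k + \xi_k$, where $M_k := V(t_k,y(t_k))$, $\rho(\Delta)<1$ captures the one-epoch exponential decay of the averaged system, and $\{\xi_k\}$ is a martingale-difference (or sufficiently weakly dependent) sequence with summable conditional variance controlled by $\Delta^2$.

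Finally, choose $\Delta^* >0$ so small that $\rho(\Delta) + C\Delta < 1$ for every $\Delta < \Delta^*$; then $\{M_k\}$ is a nonnegative supermartingale up to the controlled fluctuation, and a stochastic stability lemma in the spirit of Kushner \cite{kushner1967stochastic} (or a direct Borel--Cantelli argument on the events that $M_k$ fails to contract by the nominal factor) forces $M_k\to 0$ almost surely. Since $V$ is quadratically equivalent to $\|y\|^2$, this gives $y(t)\to 0$ a.s., i.e.\ local almost-sure asymptotic synchronization of \eqref{eq:stochastic}, which is the claim.

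The step I expect to be the main obstacle is the passage from decay \emph{in expectation} to the \emph{almost-sure} conclusion: the per-epoch perturbation $L(t)-E[L]$ does not shrink as $\Delta\to 0$, only its time-integral against the smooth averaged flow does, so the recursion must be organized as a genuine contraction $M_{k+1}\le \alpha M_k + \xi_k$ with $\alpha<1$ and a summable-variance noise term before any martingale convergence machinery can be applied. The boundedness and continuity hypotheses on $F$ and $B$ are precisely what make the Duhamel remainder $O(\Delta^2)$ and keep $\alpha(\Delta)$ strictly below one for $\Delta<\Delta^*$.
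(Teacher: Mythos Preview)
The paper does not actually prove Theorem~\ref{thm1}: it is explicitly restated from \cite{porfiri2008synchronization}, and the only argument the authors give in this section is the short proof of Corollary~\ref{cor1}, which says ``the proof of Theorem~\ref{thm1} found in \cite{porfiri2008synchronization} need only be altered slightly'' and then records the two-term generators $M(t)$, $\bar M(t)$ and the bound $\beta=\max(\beta_1,\beta_2)$ on the coupling matrices. So there is no in-paper proof against which to check your proposal line by line.

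That said, your strategy is the right genre of argument and is consistent with the fragments visible in the paper's treatment of the corollary. The objects $M(t)$ and $\bar M(t)$ that the authors name are exactly the realized and averaged generators you compare via Duhamel, and the emphasis on boundedness of $B_1,B_2$ matches your use of the hypothesis that $F(t),B(t)$ are bounded and continuous to control the remainder uniformly. The overall architecture---converse Lyapunov for the averaged linear time-varying system, an $O(\Delta)$ (refined to $O(\Delta^2)$ bias plus mean-zero fluctuation) epoch-wise comparison of transition matrices, and then a supermartingale/Kushner argument to pass from contraction in mean to almost-sure convergence---is the standard route for fast-switching averaging results of this type and is almost certainly what \cite{porfiri2008synchronization} does in some form.

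One point to tighten if you write this out in full: your claim that integrating $L(t)-E[L]$ against the averaged flow improves the deterministic $O(\Delta)$ bound to an $O(\Delta^2)$ bias plus a mean-zero term requires that the $L$'s drawn on successive epochs be i.i.d.\ (or at least have the correct conditional mean $E[L]$ given the past), not merely that $E[L]$ be the time average. The paper's setup has $L_2(t)$ piecewise constant on $[t_k,t_{k+1})$ with i.i.d.\ samples, so this is fine here, but your write-up should state that dependence structure explicitly before invoking the martingale-difference property of $\{\xi_k\}$. Your identification of the almost-sure step as the main obstacle is accurate; the Kushner lemma you cite (also cited by the paper) is the standard tool, and with the i.i.d.\ epoch structure a direct Borel--Cantelli argument on the contraction events also works.
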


Now we check that the hypothesis of this theorem holds in the scenario of interest to us in this paper, which is that the deterministic graph Laplacian supports synchronization and the switching period $\Delta$ between network topologies is sufficiently small.
\begin{cor}\label{cor1}
Consider the stochastic system \eqref{eq:stochastic}, where $L_2(t)$ represents the switching network topology induced by moving clouds with speed $\nu$. Suppose \eq{\omega} is the set of unobservable data over the domain \eq{\Omega}. If \eqref{eq:network} is uniformly asymptotically stable, there exists a $C>0$ and $\nu^*(C)$, such that if $||\omega|| < C$ and $\nu > \nu^*(C)$, then the time-scale between switching network topologies is sufficiently small, $\Delta < \Delta^*$, and the stochastic system locally asymptotically synchronizes almost surely.

\end{cor}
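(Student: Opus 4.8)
The strategy is to verify, for the cloud-driven network \eqref{eq:stochastic}, the three hypotheses of Theorem \ref{thm1} applied to the transverse dynamics $\dot{\zeta}(t)$ obtained by the decomposition above, and then to quantify how the geometry and speed of the occluding set $\omega$ control the switching epoch $\Delta$. Theorem \ref{thm1} asks for (i) uniform asymptotic stability of the averaged transverse system \eqref{eq:network}; (ii) boundedness and continuity of $F(t)$ and $B(t)$; and (iii) a switching time-scale $\Delta < \Delta^*$. Item (i) is exactly the standing hypothesis of the corollary. Since the diffusion term $\sigma_1 L^1 \otimes B_1$ in \eqref{eq:stochastic} does not depend on $t$, it does not contribute to the switching and is carried along as part of the (bounded, continuous) drift, leaving $L^2(t)$ as the only switching component; Theorem \ref{thm1} is then invoked with coupling Laplacian $L_2$ and coupling matrix $B(t) \equiv B_2$.

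For (ii): $B(t) \equiv B_2$ is constant, hence trivially bounded and continuous. The matrix $F(t) = I_{2N} \otimes J_i$ is evaluated along the drive trajectory $P_i(t)$, and the reaction--diffusion system \eqref{eq:Fish} admits a bounded invariant region; together with the clipping $0 \le \hat{P} \le 2$ enforced in the response model, every $P_i(t)$ stays in a compact set bounded away from the pole $P_i = -h$. Each entry of $J_i$ is a rational function of $P_i$ with denominator $P_i + h \ge h > 0$, so it is bounded uniformly in $i$ and $t$, and it is continuous in $t$ because $P_i(\cdot)$ is continuous (the clip map $\min(\max(\cdot,0),2)$ is Lipschitz). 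Hence $F$ is bounded and continuous.

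For (iii): because the clouds translate rigidly in the $x$-direction at speed $\nu$ on the $x$-periodic grid with spacing $dx$, the adjacency pattern of $L^2(t)$ is piecewise constant and reconfigures only when a boundary of $\omega$ crosses a pixel column, i.e.\ on time increments no larger than $dx/\nu$, so the switching epoch satisfies $\Delta \le dx/\nu$. Now choose $C > 0$ small enough that (a) the $x$-projection of any admissible $\omega$ with $\|\omega\| < C$ is a proper subset of the period, so that no column is occluded for all time, and (b) the resulting time-averaged Laplacian $E[L]$ remains in the stability regime of \eqref{eq:network}; write $\ell(C)$ for the corresponding bound on the $x$-extent of $\omega$. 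Setting $\nu^*(C) := \max\{dx, \ell(C)\}/\Delta^*$, the conditions $\|\omega\| < C$ and $\nu > \nu^*(C)$ force $\Delta < \Delta^*$, and Theorem \ref{thm1} then gives almost sure convergence of $\zeta(t)$ to zero, that is, local asymptotic synchronization of \eqref{eq:stochastic} almost surely.

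The principal obstacle is step (iii): one must argue that the \emph{deterministic}, advection-generated switching of $L^2(t)$ genuinely falls under the stochastic fast-switching hypothesis of Theorem \ref{thm1}, and in particular that the time-average of $L^2(t)$ over one advection period is the matrix $E[L]$ entering \eqref{eq:network}. The bookkeeping behind $C$ is equally delicate, since $\|\omega\|$ being small does not by itself bound the $x$-extent of $\omega$ --- a thin zonal strip has small measure yet can keep a column occluded forever. The constant $C$ must therefore constrain the cloud geometry in the direction of motion, not merely its measure, which is the rigorous counterpart of the earlier observation that the occluded region must be ``small enough such that diffusion allows information to pass into any hidden regions.''
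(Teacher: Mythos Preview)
Your argument follows the same route as the paper's --- reduce to Theorem~\ref{thm1} by checking boundedness and continuity of the relevant matrices --- but with considerably more care and one technical difference in how the static diffusion is handled. Rather than absorbing $\sigma_1 L^1 \otimes B_1$ into the drift and invoking Theorem~\ref{thm1} with the single switching Laplacian $L^2(t)$ as you do, the paper instead reopens the proof of Theorem~\ref{thm1} and redefines $M(t)$ and $\bar M(t)$ to carry \emph{both} Laplacians simultaneously,
\begin{eqnarray}
\nonumber M(t) &=& I_{2N-1}\otimes F(t) + \sigma_1 W^T L^1 W \otimes B_1 + \sigma_2 W^T L^2(t) W \otimes B_2,
\end{eqnarray}
then notes $\|B_1\|\le\beta_1$, $\|B_2\|\le\beta_2$, sets $\beta=\max(\beta_1,\beta_2)$, and declares that the remainder of the Porfiri--Stilwell argument goes through unchanged. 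This sidesteps the structural issue you implicitly face, namely that folding $L^1$ into the drift destroys the block-diagonal form $I_{N-1}\otimes F(t)$ assumed in the statement of Theorem~\ref{thm1}; the paper's device of modifying $M(t)$ inside the proof avoids that mismatch.

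On the other hand, the paper does \emph{not} spell out how $\nu$ and $\|\omega\|$ actually control the switching epoch; your bound $\Delta \le dx/\nu$ and your explicit construction of $\nu^*(C)$ fill in a step the paper merely asserts. The concerns you flag at the end --- that the advection-generated switching is deterministic, not stochastic, so one must identify the time average of $L^2$ with the $E[L]$ of Theorem~\ref{thm1}, and that smallness of $\|\omega\|$ alone does not bound the cloud's extent in the direction of motion --- are genuine soft spots that the paper's proof leaves unaddressed as well.
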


\begin{proof}
The proof of Theorem \ref{thm1} found in \cite{porfiri2008synchronization} need only be altered slightly for our modification. 
We modify the definition of \eq{M(t)} so that 
\begin{eqnarray}
\nonumber M(t) &=& I_{2N-1} \otimes F(t) + \sigma_1 W^T L^1 W \otimes B_1 \\ \nonumber && + \ \sigma_2 W^T L^2(t) W \otimes B_2 , \ \ \ \ \ \textrm{and} \\[.2cm]
 \nonumber \bar{M}(t) &=& I_{2N-1} \otimes F(t) + \sigma_1 W^T E[L^1] W \otimes B_1 \\ \nonumber && + \ \sigma_2 W^T E[L^2] W \otimes B_2.
 \end{eqnarray}
We note that both $B_1$ and $B_2$ are bounded and continuous for $t > 0$, and there are positive constants \eq{\beta_1, \beta_2,} and \eq{\beta}, such that for any \eq{t \geq 0}, \eq{|| B_1|| \leq \beta_1, \textrm{ and } || B_2(t) || \leq \beta_2}. Then setting \eq{\beta = \mathrm{max}(\beta_1, \beta_2)}, the remainder of the proof follows. Therefore, we are guaranteed a time-scale, $\triangle^* > 0$, below which the system will asymptotically synchronize.
\end{proof}

The referenced time scale is the time epoch under which the drive system is occluded. That is, if the drive system is occluded for too long, corresponding to clouds that are too large or plentiful, the systems will not exhibit synchronization. 

To demonstrate Corollary \ref{cor1} with respect to our problem, we simulate with varied rates of cloud movement. As cloud movement slows, $\Delta$ increases, and the graph Laplacian switches less frequently. If cloud movement slows too much, then the systems will not exhibit synchronization. Figure \ref{fig:errvsspeed} includes the results of simulations over a fixed time epoch of $t = 2400$. Parameter and state errors increase as the rate of cloud movement decreases. 

\begin{figure}[!h]
\hspace*{-.3cm}
\subfloat[][]{\includegraphics[width=.255 \textwidth]{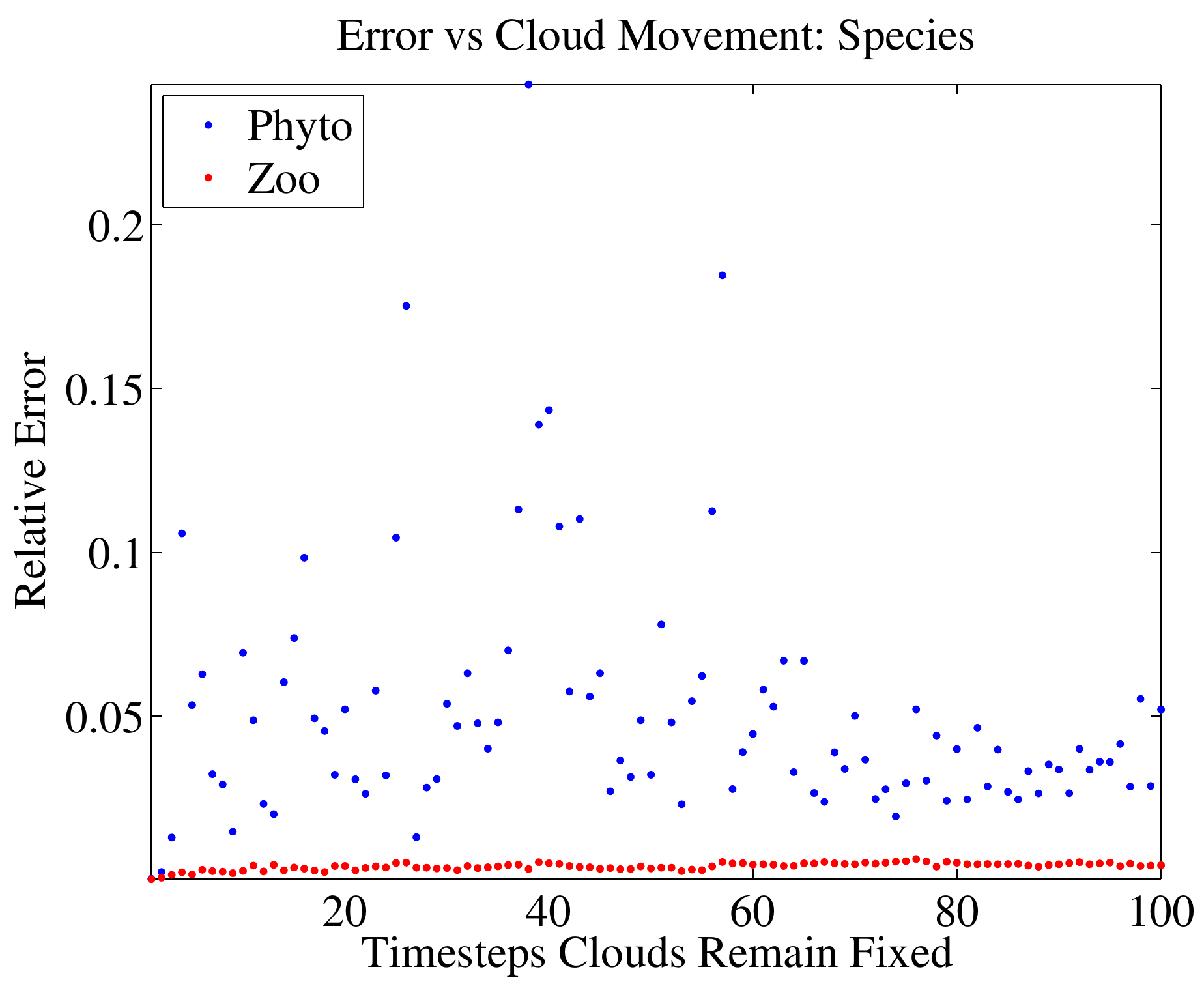}\label{fig:Speed_vs_Error_Species}}  \hspace*{-.1cm}
\subfloat[][]{\includegraphics[width=.24 \textwidth]{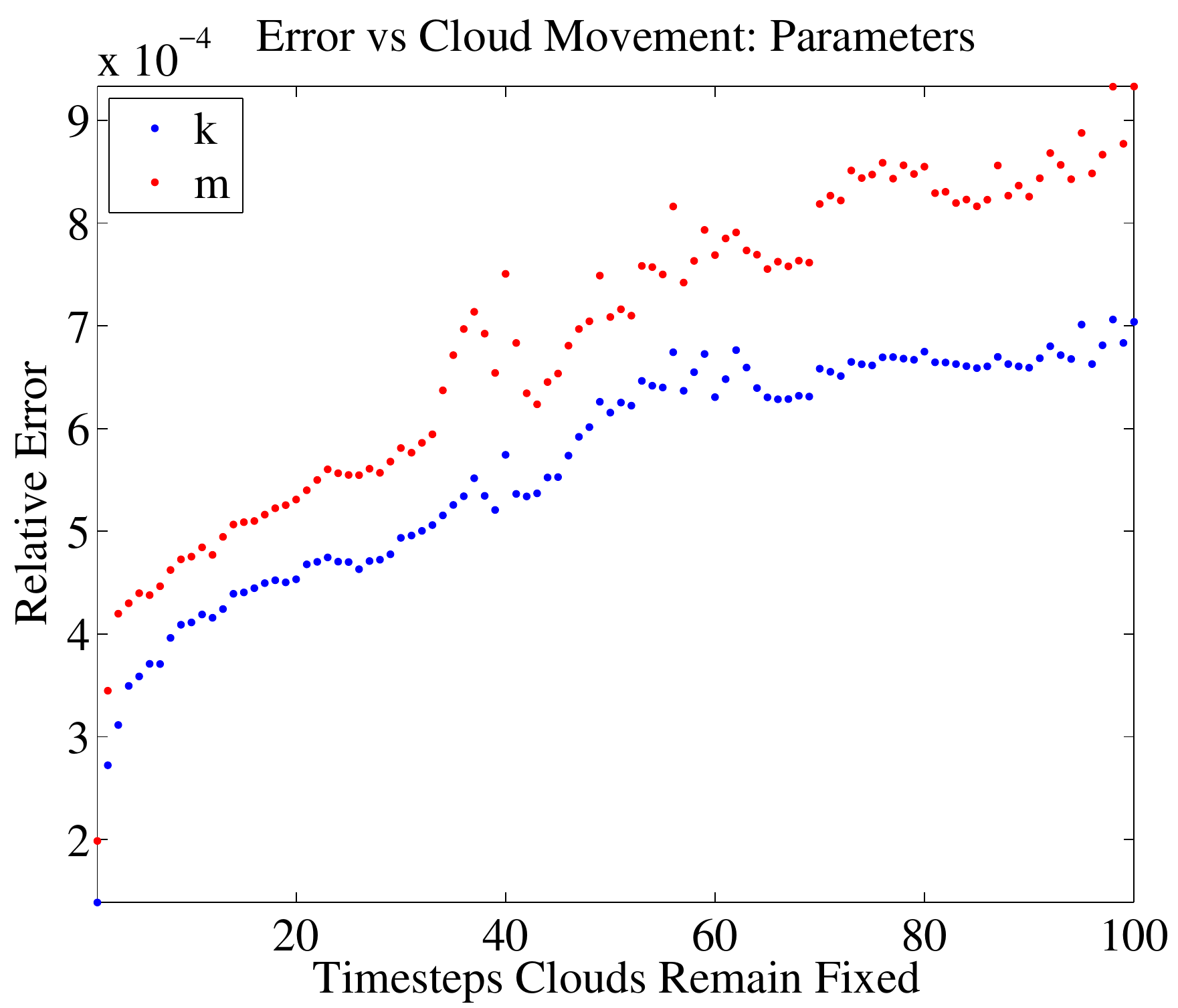}\label{fig:Speed_vs_Error_Parameters}}
\caption{Synchronization error plotted against rate of cloud movement over observations after simulation for $t = 2400$. Species shown in Figure \ref{fig:Speed_vs_Error_Species} and parameters in Figure \ref{fig:Speed_vs_Error_Parameters}.}
\label{fig:errvsspeed}
\end{figure} 

\section{Summary}
We have extended the method of autosynchronization for PDEs with spatially dependent parameters to partially observable noisy PDEs with noisy parameters. We have shown that two PDEs can synchronize together even if the drive system is largely unobservable. Furthermore, all model states and parameters are estimated by sampling only one partially available noisy state. The work is a step toward modeling ocean ecology by remote sensing data over coastal regions or regions with recurring algal blooms. Remote sensing data in the form of hyperspectral satellite imagery often suffer from cloud coverage occluding parts of the domain over which we observe. Parameters and model states are estimated by treating the drive and response systems as independent oscillators on the unobservable set and periodically driving the dynamics toward the synchronization manifold. 

Future work in this area includes adapting the method to work with reaction-diffusion-advection models, and building tools to optimize parameter and state estimation given extremely sparse data. To find a Lyapunov function for the system would provide strong theoretical backing. Furthermore, one might study the observability of the system, or similar systems, in order to understand which state variables are necessary to be sampled for successful estimation \cite{letellier2005relation,letellier2005graphical} 

Algal blooms, especially harmful algal blooms, can have widespread negative consequences on local fisheries and tourism. In effect, models could inform management decisions and provide forecasts for local communities. Coupled with optical flow techniques \cite{Luttman2012AstreamFunctionFrame}, advection could be added to reaction-diffusion models for additional accuracy, particularly over regions for which there are no vector field data for ocean currents. With advection data, techniques such as Finite-time/size-Lyapunov exponents \cite{haller2002lagrangian,bollt2012measurable} or coherent sets based on transfer operator theory \cite{haller98finite,haller2000finding,froyland2009almost,bollt2012measurable} could be used to analyze the coastal dynamics to uncover Lagrangian coherent structures that might inhibit transport between regions of the ocean. Several such techniques could be used in concert to build bloom forecasts for coastal communities, informed by remote sensing data. 

\begin{acknowledgements}
This work has been supported by the Office of Naval Research under N00014-15-1-2093, the Army Research Office under N68164-EG, and a faculty development grant from Norwich University. We would like to thank Dr. Nicholas Tufillaro at Oregon State University's College of Earth, Ocean, and Atmospheric Sciences for providing the satellite image products in the paper. 
\end{acknowledgements}

\bibliography{Clouds_Arxiv.bbl}

\end{document}